

\documentclass[11pt]{article}
\usepackage{amsmath,amsfonts,amssymb,amsthm}


\theoremstyle{plain}						
\newtheorem{proposition}{Proposition}[section]
\newtheorem{lemma}[proposition]{Lemma}
\newtheorem{theorem}[proposition]{Theorem}
\newtheorem{corollary}[proposition]{Corollary}
\newtheorem{question}[proposition]{Question}

\theoremstyle{definition}					

\theoremstyle{remark}						

\font\teneufm	= eufm10
\font\seveneufm	= eufm7
\font\fiveeufm	= eufm5

\font\tenscr	= rsfs10 
\font\sevenscr	= rsfs7  
\font\fivescr	= rsfs5  

\font\tenbbm	= bbm10
\font\sevenbbm	= bbm7
\font\fivebbm	= bbm5

\newfam\eufmfam
\textfont\eufmfam=\teneufm
\scriptfont\eufmfam=\seveneufm
\scriptscriptfont\eufmfam=\fiveeufm
\def\eufm{\fam\eufmfam \teneufm}

\newfam\scrfam
\textfont\scrfam=\tenscr
\scriptfont\scrfam=\sevenscr
\scriptscriptfont\scrfam=\fivescr
\def\scr{\fam\scrfam}

\newfam\bbmfam
\textfont\bbmfam=\tenbbm
\scriptfont\bbmfam=\sevenbbm
\scriptscriptfont\bbmfam=\fivebbm

\def\bbb{{\eufm b}} 		
\def\ddd{{\eufm d}} 		
\def\ccc{{\eufm c}} 		
\def\D {{\mathcal D}}       
\def\E {{\mathcal E}}       
\def\I {{\scr I}}		
\def\J {{\scr J}}		
\def\F {{\scr F}}		
\def\G {{\mathcal G}}       
\def\U {{\scr U}}		
\def\V {{\scr V}}		
\def\EE {{\mathbb E}}        
\def\OO {{\mathbb O}}        
\def\PP {{\mathbb P}}        

\setcounter{secnumdepth}{1}

\begin{document}

\begin{center}
{\LARGE Rapid ultrafilters and summable ideals}\\[4mm]
\textsc{Jana Fla\v{s}kov\'a}
\footnote{Work done patially during a visit to the Institut Mittag-Leffler (Djursholm, Sweden) and partially supported from the European Science Foundation in the realm of the activity entitled 'New Frontiers of Infinity: Mathematical, Philosophical and Computational Prospects'.}\\[3mm]
{\large Department of Mathematics, University of West Bohemia}
\end{center}

\begin{abstract}
This note answers a question raised in \cite{FALC}: Is it consistent that for
an arbitrary tall summable ideal $\I_g$ there exists an $\I_g$-ultrafilter
which is not rapid? We show that assuming Martin's Axiom for $\sigma$-centered posets 
such ultrafilters exist for every tall summable ideal $\I_g$.
\end{abstract}

\section{Introduction}

This note follows up the author's paper ``$\I$-ultrafilters and
summable ideals" \cite{FALC} in which the connections between rapid
ultrafilters and $\I_g$-ultrafilters have been studied. We will use the same
notation and recall the most important definitions and facts in this
introduction.
\smallskip

An ultrafilter $\U$ is called a {\it rapid ultrafilter\/} if the
enumeration functions of sets in $\U$ form a dominating family in
$({}^{\omega}\omega, \leq^{\ast})$, where the enumeration function of a set $A$ is
the unique strictly increasing function $e_A$ from $\omega$ onto $A$.
An ultrafilter $\U$ is called a {\it $Q$-point\/} if for every partition
$\{Q_n: n \in \omega\}$ of $\omega$ into finite sets there is $A \in \U$
such that $|A \cap Q_n| \leq 1$ for every $n \in \omega$.
Clearly, every $Q$-point is a rapid ultrafilter, but the converse is not true (see e.g.~\cite{Mi}).
\smallskip

For a function $g: \omega \rightarrow (0,+\infty)$ such that $\sum\limits_{n \in \omega} g(n) = +\infty$ the family
$$\I_g = \{A \subseteq \omega: \sum_{a\in A} g(a) < +\infty\}$$
is an ideal on $\omega$, which we call the {\it summable ideal determined by function\/} $g$. A summable ideal is tall if and only if $\lim\limits_{n \rightarrow \infty} g(n) = 0$.

The following description of rapid ultrafilters can be found in \cite{V}:
\begin{theorem} \label{MainRapid}
For an ultrafilter $\U \in \omega^{\ast}$ the following are equivalent:
\begin{enumerate}
\item $\U$ is rapid
\item $\U \cap \I_g \neq \emptyset$ for every tall summable ideal $\I_g$
\end{enumerate}
\end{theorem}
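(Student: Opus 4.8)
The plan is to prove both implications, and essentially all of the work sits in (2)$\Rightarrow$(1). Throughout I would use the reformulation that $\U$ is rapid if and only if for every strictly increasing $h \in {}^{\omega}\omega$ there is $A \in \U$ with $h \leq^{\ast} e_A$, together with the elementary dictionary ``$e_A(n) \geq h(n)$ for all $n \geq N$'' $\iff$ ``$|A \cap [0,h(n))| \leq n$ for all $n \geq N$'' (if at most $n$ points of $A$ lie below $h(n)$ then the $(n{+}1)$-st point $e_A(n)$ is $\geq h(n)$, and conversely). Thus rapidity says exactly that every increasing $h$ is outrun by some member of $\U$, in the sense that this member has at most $n$ points below $h(n)$ eventually.

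For (1)$\Rightarrow$(2) I would argue directly, using tallness only through $\lim_n g(n)=0$. Given a tall summable ideal $\I_g$, I would pick a strictly increasing sequence $N_0 < N_1 < \cdots$ with $g(m) \leq 2^{-k}$ whenever $m \geq N_k$, and apply rapidity to $h(k) = N_{k+1}$ to get $A = \{a_0 < a_1 < \cdots\} \in \U$ with $a_k = e_A(k) \geq N_{k+1}$ for all large $k$. Then $g(a_k) \leq 2^{-(k+1)}$ eventually, so $\sum_{a \in A} g(a) < +\infty$ and $A \in \U \cap \I_g$. I do not expect to need $\sum_n g(n)=+\infty$ for this direction.

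For (2)$\Rightarrow$(1) I would argue by contraposition: assuming $\U$ is not rapid, I would build one tall summable ideal disjoint from $\U$. Non-rapidity yields a strictly increasing $f$ with $e_A \not\geq^{\ast} f$ for every $A \in \U$; writing $I_j = [f(j),f(j+1))$ and $c_j(A) = |A \cap I_j|$, this says every $A \in \U$ satisfies $\sum_{j<n} c_j(A) \geq n - C$ for infinitely many $n$, with $C$ a constant absorbing $[0,f(0))$. I would then let $g$ be constant equal to $1/(j+1)$ on each $I_j$. This $g$ is tall ($g \to 0$ as $j \to \infty$) and divergent ($\sum_m g(m) = \sum_j |I_j|/(j+1) \geq \sum_j 1/(j+1) = +\infty$), and the $g$-mass of $A$ equals, up to a finite term, $\sum_j c_j(A)/(j+1)$. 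The crux is the averaging estimate: if $\sum_j c_j/(j+1) < +\infty$ then $\tfrac1n\sum_{j<n} c_j \to 0$ (split off a small tail and use $(j+1)\leq n$ for $j<n$; this is a Kronecker-type fact). Since every member of $\U$ is ``slow'' in the above sense, each has $\tfrac1n\sum_{j<n}c_j \geq 1 - C/n$ infinitely often, so its $g$-mass is infinite; hence $\U \cap \I_g = \emptyset$, contradicting (2).

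The main obstacle is conceptual and lives entirely in (2)$\Rightarrow$(1): one must resist the temptation to build $\I_g$ so that \emph{every} finite-$g$-mass set is fast. That is impossible for any tall summable ideal — given $g \to 0$ and any $f$, one can plant about $n_i$ points of tiny weight just below $f(n_i)$ along a very sparse sequence $n_i$, producing a slow set of finite $g$-mass. The argument must therefore exploit that \emph{all} members of the fixed ultrafilter are slow, not merely that slow sets exist, and the weights $1/(j+1)$ are calibrated so that the averaging estimate upgrades ``slow infinitely often'' to ``divergent $g$-mass''. Checking the reformulations of rapidity and confirming that the planted-points heuristic is genuinely killed by this estimate are the steps I would treat most carefully.
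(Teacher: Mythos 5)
Your proof is correct, but there is nothing in the paper to compare it against: Theorem \ref{MainRapid} is quoted from Vojt\'{a}\v{s} \cite{V} without proof, so your write-up supplies an argument the paper deliberately outsources. Both directions check out. In (1)$\Rightarrow$(2) you rightly use tallness only through $\lim_n g(n)=0$, and your dictionary between $e_A(n)\geq h(n)$ and $|A\cap[0,h(n))|\leq n$ is accurate. In (2)$\Rightarrow$(1) the Kronecker-type estimate is sound (split off a tail of small $\sum_{j\geq J} c_j/(j+1)$ and bound $c_j \leq (j+1)\cdot c_j/(j+1)$ with $j+1\leq n$), and the conclusion follows: for $A\in\U$ the bound $\sum_{j<n}c_j(A)\geq n-C$ infinitely often (with $C$ depending on $A$, which is harmless since divergence is needed per $A$) gives $\limsup_n \frac{1}{n}\sum_{j<n}c_j(A)\geq 1$, hence $\sum_j c_j(A)/(j+1)=+\infty$ and $A\notin\I_g$. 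One comparison worth making explicit: the ideal you construct --- weight $\frac{1}{j+1}$ on the blocks $[f(j),f(j+1))$ --- is literally the ideal $\I_{g_f}$ that the paper builds in Propositions \ref{atmostD} and \ref{Bcentr}. There, however, these ideals are used only for inclusion comparisons between summable ideals (via $g\leq^{\ast} g_f$), with Theorem \ref{MainRapid} taken as a black box; your averaging argument is exactly the missing analytic ingredient showing directly that a non-rapid ultrafilter misses some $\I_{g_f}$. In effect you have shown that the nontrivial half of Proposition \ref{atmostD} --- meeting every $\I_{g_f}$ for $f$ in a dominating family implies rapidity --- can be proved from scratch, without citing \cite{V}. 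Your closing observation is also apt: since every tall summable ideal contains ``slow'' sets of finite mass, no choice of $g$ can make all ideal sets fast, and your proof correctly exploits that \emph{all} members of the fixed ultrafilter are slow, which is precisely where the weights $\frac{1}{j+1}$ earn their keep.
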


The definition of an $\I$-ultrafilter was given by Baumgartner in \cite{B}:
Let $\I$ be a~family of subsets of a~set $X$ such that $\I$ contains all singletons and is
closed under subsets. Given an ultrafilter $\U$ on $\omega$, we say that ${\U}$ is an {\it
$\I$-ultrafilter\/} if for every function $F:\omega \rightarrow X$ there exists $A \in \U$
such that $F[A] \in \I$.

We say that an ultrafilter $\U$ is a {\it hereditarily rapid
ultrafilter\/} if it is a rapid ultrafilter such that for every
$\V \leq_{RK} \U$ the ultrafilter $\V$ is again a rapid
ultrafilter.
Since every hereditarily rapid ultrafilter is obviously a rapid ultrafilter,
the existence of hereditarily rapid ultrafilters is not provable in ZFC
because Miller proved in \cite{Mi} that there are no rapid ultrafilters in Laver model.
On the other hand, every selective ultrafilter is hereditarily
rapid, thus the existence of hereditarily rapid ultrafilters is consistent with
ZFC.

The following characterization of hereditarily rapid ultrafilters follows from
the definition, Theorem \ref{MainRapid} and from the fact that the class of
$\I$-ultrafilters is downwards closed with respect to the Rudin-Keisler
order on ultrafilters.
\begin{theorem}   \label{herrapid}
For an ultrafilter $\U \in \omega^{\ast}$ the following are equivalent:
\begin{enumerate}
\item $\U$ is hereditarily rapid
\item $\U$ is an $\I$-ultrafilter for every tall summable ideal $\I$
\end{enumerate}
\end{theorem}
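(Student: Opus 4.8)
The plan is to route everything through a single bridging observation: for any ultrafilter $\U$ and any tall summable ideal $\I_g$, the ultrafilter $\U$ is an $\I_g$-ultrafilter if and only if every $\V \leq_{RK} \U$ satisfies $\V \cap \I_g \neq \emptyset$. Together with Theorem \ref{MainRapid} this makes both implications almost immediate, since that theorem already converts ``$\V \cap \I_g \neq \emptyset$ for all tall summable $\I_g$'' into ``$\V$ is rapid.''

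First I would establish the observation, which is really just the translation between the ``function'' formulation of a Baumgartner $\I$-ultrafilter and the ``image ultrafilter'' formulation of the Rudin--Keisler order. For the forward direction, assume $\U$ is an $\I_g$-ultrafilter and let $\V = f(\U) \leq_{RK} \U$. Apply the $\I_g$-ultrafilter property to the witnessing map $F = f$ to obtain $A \in \U$ with $f[A] \in \I_g$; since $A \subseteq f^{-1}(f[A])$ we get $f^{-1}(f[A]) \in \U$, that is, $f[A] \in \V$, so $f[A] \in \V \cap \I_g$. For the converse, given an arbitrary $F : \omega \to \omega$, the image $F(\U)$ is an RK-predecessor of $\U$, so by hypothesis there is $B \in F(\U) \cap \I_g$; then $A = F^{-1}(B) \in \U$ and $F[A] \subseteq B$, whence $F[A] \in \I_g$ because $\I_g$ is closed under subsets. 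This single pullback/pushforward correspondence is the one step carrying actual content; the rest is bookkeeping.

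Granting the observation, the direction (1) $\Rightarrow$ (2) reads as follows: if $\U$ is hereditarily rapid and $\I_g$ is tall summable, then every $\V \leq_{RK} \U$ is rapid, so $\V \cap \I_g \neq \emptyset$ by Theorem \ref{MainRapid}, and the observation yields that $\U$ is an $\I_g$-ultrafilter. For (2) $\Rightarrow$ (1), assume $\U$ is an $\I$-ultrafilter for every tall summable $\I$. Taking $F$ to be the identity gives $\U \cap \I_g \neq \emptyset$ for each such $\I_g$, so $\U$ is rapid by Theorem \ref{MainRapid}. To get hereditariness, fix $\V \leq_{RK} \U$ and any tall summable $\I_g$; the observation applied to $\U$ already gives $\V \cap \I_g \neq \emptyset$, and since $\I_g$ was arbitrary, Theorem \ref{MainRapid} shows $\V$ is rapid, so every RK-predecessor of $\U$ is rapid. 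Equivalently, one can invoke the quoted downward closure of the class of $\I$-ultrafilters under $\leq_{RK}$: each $\V \leq_{RK} \U$ inherits the $\I$-ultrafilter property for all tall summable $\I$ and is then rapid by the identity-function argument.

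I do not expect a genuine obstacle, as the theorem is advertised as a formal consequence of the definitions and Theorem \ref{MainRapid}. The only place demanding care is the forward direction of the bridging observation, where one must check that the witness $f[A]$ actually lands in the image ultrafilter $\V$; this hinges on the elementary fact that $A \in \U$ forces $f^{-1}(f[A]) \in \U$. Once that correspondence is secured, both implications follow with no further work.
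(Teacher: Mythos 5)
Your proof is correct and follows essentially the same route the paper indicates: the paper derives the theorem from the definition, Theorem \ref{MainRapid}, and the downward $\leq_{RK}$-closure of the class of $\I$-ultrafilters, and your bridging observation is precisely the pullback/pushforward translation underlying that closure fact (your forward direction is the downward closure plus the identity-function argument, your converse the standard reverse translation). You have merely written out in full what the paper leaves as an immediate consequence.
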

\medskip

It was proved in \cite{FALC} that $Q$-points (and consequently rapid ultrafilters)
need not be $\I_g$-ultrafilters in a strong sense.

\begin{theorem} \label{QnotI}
$(MA_{\small \rm ctble})$ There is a $Q$-point which is not an $\I_g$-ultrafilter
for any summable ideal $\I_g$.
\end{theorem}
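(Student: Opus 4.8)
The plan is to build the desired $Q$-point by a transfinite recursion of length $\mathfrak{c}$, using $MA_{\mathrm{ctble}}$ (equivalently $\mathrm{cov}(\mathcal{M})=\mathfrak{c}$). I would fix bookkeeping enumerations $\langle P_\alpha:\alpha<\mathfrak{c}\rangle$ of all partitions of $\omega$ into finite sets, $\langle g_\alpha:\alpha<\mathfrak{c}\rangle$ of all functions determining a summable ideal, and $\langle X_\alpha:\alpha<\mathfrak{c}\rangle$ of all subsets of $\omega$, and then construct an increasing chain of filter bases $\mathcal{B}_\alpha$, each of size $<\mathfrak{c}$ with the finite intersection property, so that the final $\U=\bigcup_\alpha\langle\mathcal{B}_\alpha\rangle$ is an ultrafilter (deciding every $X_\alpha$), a $Q$-point (absorbing a selector for every $P_\alpha$), and, for every summable $\I_{g_\alpha}$, not an $\I_{g_\alpha}$-ultrafilter.

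For the last requirement I would, at stage $\alpha$, commit to a witnessing function $F_{g_\alpha}$ and keep it a witness forever. The key structural point is that finite-to-one witnesses are useless: if $F$ is finite-to-one then $\J=\{A:F[A]\in\I_{g_\alpha}\}$ is essentially again a tall summable ideal, so the rapid ultrafilter $\U$ must meet it by Theorem~\ref{MainRapid}, i.e. some $A\in\U$ has $F[A]\in\I_{g_\alpha}$ and $F$ fails. Hence $F_{g_\alpha}$ must be infinite-to-one; I would take it to be the collapse of a partition $\{C^{(\alpha)}_j:j\in\omega\}$ of $\omega$ into infinite fibres, so that $F_{g_\alpha}[A]\notin\I_{g_\alpha}$ precisely when the \emph{trace} $\{j:A\cap C^{(\alpha)}_j\neq\emptyset\}$ has infinite $g_\alpha$-sum. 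Since traces are monotone and $\I_{g_\alpha}$ is downward closed, it suffices to guarantee that \emph{every element of every $\mathcal{B}_\beta$ has $g_\alpha$-divergent trace}: then every $A\in\U$ contains such a set, its own trace is divergent, and $F_{g_\alpha}$ witnesses failure. Choosing the fibres so that the $<\mathfrak{c}$ sets currently in $\mathcal{B}_\alpha$ all have divergent trace is itself a countable forcing (finite partial fibre-assignments): divergence is dense because $\sum_j g_\alpha(j)=\infty$ lets one throw a fresh batch of points of any infinite $B$ into fresh fibres carrying large $g_\alpha$-mass, so $MA_{\mathrm{ctble}}$ supplies such a partition (and keeps every fibre infinite).

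The $Q$-point and ultrafilter steps are the usual $\sigma$-centered ones, but each newly added set must also be made trace-good for all ideals handled so far. For $P_\alpha$ I would use the countable poset of finite partial selectors (one point per block); the conditions ``meet a given element of $\mathcal{B}_\alpha$ infinitely'' and ``push the $g_\gamma$-trace above $N$'' (for $\gamma\le\alpha$ and $N\in\omega$) are each dense, and there are $<\mathfrak{c}$ of them, so $MA_{\mathrm{ctble}}$ yields a selector $S_\alpha$ compatible with $\mathcal{B}_\alpha$ and with $g_\gamma$-divergent trace for every $\gamma\le\alpha$.

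The hard part is deciding $X_\alpha$ while preserving the trace invariant for \emph{all} previously committed ideals simultaneously: one side of $X_\alpha$ may be $g_{\gamma_1}$-good and the other $g_{\gamma_2}$-good, so no single choice is obviously safe. I would forestall this by also demanding, in the fibre-selection forcing at each stage, that the new partition be \emph{mutually independent} from all earlier ones, i.e. $C^{(\alpha)}_i\cap C^{(\gamma)}_j$ is infinite for all $\gamma<\alpha$ and all $i,j$ (again dense, hence obtainable under $MA_{\mathrm{ctble}}$). Independence kills the conflict: if $\omega\setminus X_\alpha$ had convergent $g_{\gamma_1}$-trace, then $X_\alpha$ would contain all but $g_{\gamma_1}$-few full $\gamma_1$-fibres, and each such fibre meets every $\gamma_2$-fibre, forcing the $g_{\gamma_2}$-trace of $X_\alpha$ to be all of $\omega$; so $X_\alpha$ cannot simultaneously be $g_{\gamma_2}$-bad. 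Thus one fixed side is $g_\gamma$-good for every $\gamma\le\alpha$ at once, and that good side can be taken filter-compatible (if the finite intersection property forces a side, that side contains a base element up to a finite set and hence automatically inherits divergent traces). A final $MA_{\mathrm{ctble}}$ thinning produces $W$ contained in that side, meeting $\mathcal{B}_\alpha$ and trace-good for all $\gamma\le\alpha$; adding $W$ decides $X_\alpha$ and maintains every invariant. Checking properness at limit stages (the bases stay of size $<\mathfrak{c}$ with the finite intersection property) is then routine.
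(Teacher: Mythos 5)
First, a caveat: this paper does not actually contain a proof of Theorem~\ref{QnotI} --- it is quoted from \cite{FALC} --- so there is no in-paper argument to compare against, and I evaluate your proposal on its own merits. Your overall architecture is the natural one and much of it is sound: the observation that a finite-to-one witness $F$ is useless against a rapid ultrafilter (because $\{A : F[A]\in\I_g\}$ contains the tall summable ideal $\I_{g\circ F}$, which $\U$ must meet by Theorem~\ref{MainRapid}) is correct, the per-ideal collapse maps with the invariant ``every base element has $g_\gamma$-divergent trace'' correctly reduce everything to maintaining that invariant, and your $Q$-point step does go through (discarding finitely many finite blocks deletes only finitely many trace indices, so the selector dense sets are genuinely dense).

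The genuine gap is in the ultrafilter step, and your independence trick does not close it. Mutual independence of the partitions is a \emph{global} property, while the invariant you must preserve concerns the \emph{relative} traces of $B\cap X_\alpha$ for every base element $B$; your displayed argument (``$X_\alpha$ contains a full $\gamma_1$-fibre, which meets every $\gamma_2$-fibre'') literally establishes trace-goodness of $X_\alpha$ only for $B=\omega$. Concretely, let $\{C^{(1)}_j\}$ and $\{C^{(2)}_i\}$ be mutually independent and set $B=S_1\cup S_2$, where $S_1$ picks one point of $C^{(1)}_j\cap C^{(2)}_0$ for each $j$ and $S_2$ one point of $C^{(1)}_0\cap C^{(2)}_i$ for each $i$ (independence itself guarantees such points exist). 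Then $B$ satisfies your invariant --- both traces equal $\omega$, hence are divergent --- yet for an adversarial $X$ with $X\cap B=S_1$ the set $B\cap X$ has $g_2$-trace contained in $\{0\}$ and $B\setminus X$ has $g_1$-trace contained in $\{0\}$: no subset of either side of $X$ can be added without destroying the invariant at $B$. Nothing in your construction excludes such a $B$ from the base, since your dense conditions demand only divergent traces, which $B$ has; correspondingly, in your final ``$MA_{\mathrm{ctble}}$ thinning'' the conditions ``push the $g_\gamma$-trace of $W\cap B$ above $N$'' are simply not dense when $B$ splits this way, and your fallback clause (FIP forcing a side) covers only the case where some base element is almost contained in one side, not the dangerous case where both sides are base-compatible but split some $B$ badly. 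What is missing --- and what is the real heart of the theorem --- is an invariant strong enough that the ``good'' sets form a coideal relative to every base element simultaneously, so that one side of each $X_\alpha$ is always addable; as stated, your invariant is provably too weak to guarantee this.
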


\begin{corollary} \label{rapidnotI}
$(MA_{\rm ctble})$ For an arbitrary summable ideal $\I_g$ there exists a rapid
ultrafilter which is not an $\I_g$-ultrafilter.
\end{corollary}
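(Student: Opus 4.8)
The plan is to obtain the corollary as an immediate consequence of Theorem \ref{QnotI}, so that essentially no new work is required. Theorem \ref{QnotI}, which holds under $MA_{\rm ctble}$, furnishes a single $Q$-point $\U$ that is \emph{not} an $\I_g$-ultrafilter for \emph{any} summable ideal $\I_g$. The whole strategy is to show that this one ultrafilter already witnesses the corollary for every given $\I_g$.

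First I would invoke the elementary implication recorded in the introduction, namely that every $Q$-point is a rapid ultrafilter. Applied to the $\U$ produced by Theorem \ref{QnotI}, this shows at once that $\U$ is rapid. Next, given an arbitrary tall summable ideal $\I_g$, I would simply observe that $\U$ is not an $\I_g$-ultrafilter: this is exactly the assertion of Theorem \ref{QnotI} instantiated at the fixed ideal $\I_g$. Combining the two observations, $\U$ is a rapid ultrafilter that fails to be an $\I_g$-ultrafilter, which is precisely what the corollary claims.

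The only subtlety worth flagging is the arrangement of the quantifiers. Theorem \ref{QnotI} is in fact logically stronger than the corollary requires: it produces a \emph{single} ultrafilter that works uniformly across all summable ideals, whereas the corollary only demands, for each fixed $\I_g$, the existence of a (possibly ideal-dependent) rapid witness. Because of this, there is no genuine obstacle in the deduction — the entire mathematical content resides in Theorem \ref{QnotI}, and the corollary is a routine weakening obtained by passing from ``$Q$-point'' to ``rapid'' via the trivial implication between the two notions.
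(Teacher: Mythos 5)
Your proposal is correct and is exactly the deduction the paper intends: Theorem \ref{QnotI} supplies a single $Q$-point that is not an $\I_g$-ultrafilter for any summable ideal, and the trivial implication ``$Q$-point $\Rightarrow$ rapid'' yields the corollary, with the quantifier weakening you flag being precisely why it is stated as a corollary. One cosmetic slip: you restrict to \emph{tall} summable ideals at one point, but the corollary (and Theorem \ref{QnotI}) concerns arbitrary summable ideals, and your argument goes through verbatim without the tallness assumption.
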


We also showed in \cite{FALC} that $\I_g$-ultrafilters need not be $Q$-points
by the following counterpart of Theorem \ref{QnotI}.

\begin{theorem} \label{InotQ}
$(MA_{\rm ctble})$ There exists $\U \in \omega^{\ast}$ such that $\U$ is an
$\I_g$-ultrafilter for every tall summable ideal $\I_g$ and $\U$ is not a
$Q$-point.
\end{theorem}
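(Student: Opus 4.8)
The plan is to split the statement into two nearly independent tasks: force $\U$ to avoid the obstruction that would make it a $Q$-point, and, separately, make $\U$ meet the required summable ideal in the image of every function. Fix once and for all an interval partition $\omega=\bigsqcup_n I_n$ with $|I_n|=n+1$ (all that matters is $|I_n|\to\infty$), and let $\pi\colon\omega\to\omega$ be the projection, $\pi\restriction I_n\equiv n$. Call a set $A$ \emph{bounded} if $\sup_n|A\cap I_n|<\infty$ and \emph{unbounded} otherwise; the bounded sets form an ideal, and a selector for $\{I_n\}$ is precisely a set bounded by $1$. The first observation is that an ultrafilter fails the $Q$-point property as soon as it contains no bounded set: if $A\in\U$ had $\sup_n|A\cap I_n|=k<\infty$, then writing $A$ as a union of $k$ selectors and using that $\U$ is an ultrafilter would place a selector into $\U$, and that selector witnesses the $Q$-point property for $\{I_n\}$; conversely, no selector lies in $\U$. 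Hence it suffices to arrange $\U\supseteq\F_0$, where $\F_0$ is the co-bounded dual filter of the bounded ideal, since then every member of $\U$ is unbounded.

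I would then build $\U$ by a recursion of length $\ccc$, starting from $\F_0$. Enumerate as $\langle(F_\alpha,g_\alpha):\alpha<\ccc\rangle$ all pairs of a function $F\colon\omega\to\omega$ and a $g$ determining a tall summable ideal, and as $\langle X_\alpha:\alpha<\ccc\rangle$ all subsets of $\omega$. Maintaining a filter $\F_\alpha\supseteq\F_0$ generated by fewer than $\ccc$ sets (write $\F_\alpha^{+}$ for the sets meeting every member of $\F_\alpha$ in an infinite set), I alternately perform a Type~II step, deciding $X_\alpha$ by adjoining whichever of $X_\alpha,\omega\setminus X_\alpha$ lies in $\F_\alpha^{+}$, and a Type~I step, adjoining a set $A$ with $F_\alpha[A]\in\I_{g_\alpha}$. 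Two facts make the $Q$-point bookkeeping automatic. First, a set lies in $\F_0^{+}$ exactly when it is unbounded; in particular a bounded $X_\alpha$ is never in $\F_\alpha^{+}$ (its complement already lies in $\F_0\subseteq\F_\alpha$), so the Type~II step never admits a bounded set and $\U=\bigcup_\alpha\F_\alpha$ contains no selector. Second, the same equivalence shows that the only extra demand $\F_0$ places on the Type~I sets is that each be unbounded, whence each automatically meets every co-bounded member of $\F_\alpha$ infinitely.

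The heart of the proof is the Type~I step: given $\F_\alpha$, a function $F$ and a tall summable $\I_g$, produce an \emph{unbounded} $A\in\F_\alpha^{+}$ with $\sum_{k\in F[A]}g(k)<\infty$. I would obtain $A$ from the countable Mathias-type poset $\PP$ whose conditions are finite sets $s\subseteq\omega$ and in which an extension may append a point $j$ only if $g(F(j))<2^{-|s|}$ \emph{or} $F(j)\in F[s]$. Building summability into the order this way forces $\sum_{k\in F[A]}g(k)\le\sum_i 2^{-i}<\infty$ for every sufficiently generic $A$, instead of leaving it as a (meagre, hence genericity-unreachable) side condition. The families ``$s$ meets $D$ beyond $n$'', one for each of the $<\ccc$ generators $D$ of $\F_\alpha$, and ``some block contains at least $k$ points of $s$'' are then dense: since $g(n)\to0$, the set $\{j:g(F(j))\ge\varepsilon\}=F^{-1}\{k:g(k)\ge\varepsilon\}$ is a finite union of fibres, so outside finitely many fibres every point is freely appendable, while inside a fibre already represented in $F[s]$ every point is freely appendable as a repeat; with $|I_n|\to\infty$ this lets one simultaneously reach into any $D$ and heap arbitrarily many points into one block while keeping the image summable. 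As $\PP$ is countable (indeed $\sigma$-centred, conditions with a common stem being compatible) and there are fewer than $\ccc$ dense sets to meet, $MA_{\rm ctble}$ yields a filter meeting them all and hence the required $A$.

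I expect the main obstacle to be exactly the tension inside this last step: summability of $F[A]$ pushes $A$ to be thin, whereas $\U\supseteq\F_0$ forces every member, and thus $A$, to be unbounded for $\{I_n\}$. The reconciliation rests on the twin facts that repeated $F$-values cost nothing toward $\sum_{k\in F[A]}g(k)$ and that, since $g\to0$, all but finitely many values are $g$-small; the genuine work is to verify density in $\PP$ for an \emph{arbitrary} $F$, not assumed finite-to-one, where a block may map entirely into the finitely many large-$g$ values. Granting this, $\U=\bigcup_{\alpha<\ccc}\F_\alpha$ is an ultrafilter that is not a $Q$-point and for which every $F$ and every tall summable $\I_g$ admit some $A\in\U$ with $F[A]\in\I_g$; that is, $\U$ is an $\I_g$-ultrafilter for every tall summable $\I_g$, as required.
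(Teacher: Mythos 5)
You should note at the outset that the paper does not actually prove Theorem~\ref{InotQ} here (it is quoted from \cite{FALC}), so your proposal must stand on its own; the closest in-paper yardstick is the machinery of Lemmas~\ref{oneset} and~\ref{Istep}. Your frame is sound: extending the co-bounded filter $\F_0$ of the partition $\{I_n\}$ does kill the $Q$-point property, the Type~II step works because the bounded sets form an ideal, and $\F_0$-positivity is exactly unboundedness. The genuine gap is in the Type~I step, in two linked places. First, the dense sets you list are too weak for your own stated goal. Since $\F_0\subseteq\F_\alpha$, membership in $\F_\alpha^{+}$ requires $A\cap D^{*}$ to be \emph{unbounded} for every finite intersection $D^{*}$ of the adjoined generators: if $A\cap D^{*}$ is infinite but bounded, then $\omega\setminus(A\cap D^{*})\in\F_0$, so $D^{*}\setminus(A\cap D^{*})\in\F_\alpha$ meets $A$ in the empty set and the filter generated by $\F_\alpha\cup\{A\}$ is improper. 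A generic for your families can perfectly well meet a generator $D$ in a selector (one point per block) while achieving unboundedness only off $D$; so the dense sets must instead read ``some block contains at least $k$ points of $s\cap D^{*}$,'' for every finite intersection $D^{*}$ and every $k$.

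Second --- and this is precisely the step you concede with ``granting this'' --- the corrected sets are \emph{not} dense in your poset for arbitrary $F$; the density claim is false as $\PP$ stands, not merely unverified. Concretely: suppose an earlier Type~II step adjoined $D=F^{-1}(0)$, where $F$ maps half of every block $I_n$ to $0$ and $g(0)=1$ (such a $D$ is unbounded, so nothing prevents it from being positive and getting adjoined). A fresh point of $D$ is never appendable, since $g(F(j))=1\geq 2^{-|s|}$ for every condition $s$, and a repeat requires $0\in F[s]$; hence below any condition with $0\notin F[s]$ no extension ever meets $D$, and density fails outright. Repairing this is where the real content of the theorem lives: one must show that the set of values $m$ with $g(m)$ above the root threshold on whose fibres some positive $D^{*}$ concentrates modulo the bounded ideal is \emph{finite} (a directedness/minimality argument on finite witness sets, using properness of $\F_\alpha$), and then re-root $\PP$ at a condition containing one representative of each such value, so that those fibres become free repeats while all other needed points are eventually cheap because $g\to 0$. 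None of this appears in the proposal. It is instructive that the paper's companion argument (Lemma~\ref{oneset} plus Lemma~\ref{choosingA} feeding Lemma~\ref{Istep}) decouples the two demands --- force only a positive diagonalizing set $H$, then extract $A\in\I_g$ with $f^{-1}[A]\cap H$ positive by pure combinatorics --- thereby avoiding exactly the interaction between summability-in-the-order and filter-positivity at which your single-poset construction breaks.
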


However, we did not prove a counterpart for Corollary \ref{rapidnotI}.
Asssuming Martin`s axiom for countable posets an $\I_{1/n}$-ultrafilter which
is not rapid was constructed in \cite{FTh}, but the question remained open for
an arbitrary tall summable ideal $\I_g$.

The aim of this note is to provide a construction of an $\I_g$-ultrafilter
which is not rapid for an arbitrary tall summable ideal $\I_g$.

\section{Some properties of the summable ideals}

Let us first recall the definition of Kat\v{e}tov order $\leq_K$ for ideals on
$\omega$:
For $\I$ and $\J$ ideals on $\omega$ we write $\I \leq_K \J$ if there is a
function $f: \omega \rightarrow \omega$ such that $f^{-1}[A] \in \J$ for all
$A \in \I$.

The structure of the summable ideals ordered by Kat\v{e}tov order was
investigated by Meza \cite{M-A}. We are particularly interested how the
comparability of two ideals in Kat\v{e}tov order reflects to the inclusion 
of the corresponding classes of $\I$-ultrafilters.

Obviously, if $\I \leq_K \J$ then every $\I$-ultrafilter is a $\J$-ultrafilter.
This implication cannot be reversed in general. However, in Theorem
\ref{IgnotIh} we prove that assuming Martin's Axiom for $\sigma$-centered posets 
the converse is also true whenever $\I$ and $\J$ are tall summable ideals.

\smallskip

From now on all summable ideals will be tall and determined by a decreasing function $g$ 
(notice that every tall summable ideal can be mapped to such an ideal by a permutation). 
These ideals are invariant with respect to the translation which is formulated more precisely 
in the next lemma. For the sake of simplicity of its formulation let us fix the following notation: 
If $A$ is a subset of $\omega$ enumerated increasingly as $A=\{a_n: n \in \omega\}$
then $A+1 = \{a_n+1: n \in \omega\}$.

\begin{lemma} \label{translates}
Assume $\I_g$ is a tall summable ideal determined by a decreasing function $g$, $A$ is a subset of $\omega$ and $B \subseteq A$.
Then
\begin{enumerate}
\item $A \in \I_g$ if and only if $A+1 \in \I_g$
\item $A \in \I_g$ if and only if $B+1 \cup (A\setminus B) \in \I_g$
\end{enumerate}
\end{lemma}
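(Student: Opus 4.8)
The plan is to exploit the key fact that $g$ is decreasing, which forces the summability of $A$ and of its translate $A+1$ to be controlled by comparable tails. For part (1), I would write the defining sums $\sum_{n} g(a_n)$ and $\sum_{n} g(a_n+1)$ and compare them termwise. Since $g$ is decreasing, we have $g(a_n+1) \leq g(a_n)$ for every $n$, so $\sum_{n} g(a_n+1) \leq \sum_{n} g(a_n)$; this gives immediately that $A \in \I_g$ implies $A+1 \in \I_g$. For the reverse direction the inequality points the wrong way, so I would instead compare $g(a_n)$ against $g(a_{n-1}+1)$: because $a_{n-1}+1 \leq a_n$ and $g$ is decreasing, $g(a_n) \leq g(a_{n-1}+1)$, and hence $\sum_{n \geq 1} g(a_n) \leq \sum_{n \geq 0} g(a_n+1)$. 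Thus $A+1 \in \I_g$ implies $A \in \I_g$ (the single omitted term $g(a_0)$ is finite and irrelevant). This shifting argument is the natural way to bound each term of one sum by a term of the other.

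For part (2), I would reduce to part (1). The set $C := (B+1) \cup (A \setminus B)$ is obtained from $A$ by translating only the elements lying in $B$. The cleanest approach is to split $A = B \cup (A \setminus B)$ as a disjoint union and observe that $\sum_{a \in A} g(a) = \sum_{a \in B} g(a) + \sum_{a \in A \setminus B} g(a)$, while $\sum_{c \in C} g(c) = \sum_{b \in B} g(b+1) + \sum_{a \in A \setminus B} g(a)$, provided the two pieces of $C$ remain disjoint and the translation of $B$ does not collide with $A \setminus B$. Since the second summand $\sum_{A \setminus B} g(a)$ is common to both expressions, the finiteness of one sum is equivalent to the finiteness of the other exactly when $\sum_{b \in B} g(b) < \infty \iff \sum_{b \in B} g(b+1) < \infty$, which is precisely part (1) applied to the set $B$.

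The main obstacle I anticipate is the bookkeeping around possible collisions: after translating $B$, an element $b+1$ might coincide with an element of $A \setminus B$ or with another element $b'+1$, so that $C$ could have fewer elements than $A$ and the naive sum decomposition would overcount. I would handle this by noting that any such collision only removes finitely much or makes $C$ a subset of the ``formal'' translate, so the inequality used for the relevant implication still holds; more carefully, one checks that deleting duplicated terms only decreases a sum, which is harmless for the direction establishing membership in $\I_g$, and for the reverse direction the comparison of part (1) again dominates. Once the collision issue is dispatched, both equivalences follow from part (1) together with the additivity of the defining sum over the disjoint pieces $B$ and $A \setminus B$.
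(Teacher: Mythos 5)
Your proposal is correct and takes essentially the same route as the paper: part (1) via the identical termwise and shifted comparisons of $\sum_{n} g(a_n)$ with $\sum_{n} g(a_n+1)$ (the paper bounds the discrepancy by the single term $g(0)$), and part (2) by the same reduction splitting $A$ into $B$ and $A\setminus B$ and applying part (1) to $B$. The collision bookkeeping you anticipate is unnecessary — the map $b \mapsto b+1$ is injective, so elements of $B+1$ never collide with each other, and any overlap with $A\setminus B$ is irrelevant because $\I_g$ is an ideal: $(B+1)\cup(A\setminus B)\in\I_g$ if and only if $B+1\in\I_g$ and $A\setminus B\in\I_g$, which is exactly how the paper dispatches part (2) without ever comparing exact sum values.
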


\begin{proof}
1. Since the function $g$ is decreasing, $g(a_n) \geq  g(a_n+1) \geq g(a_{n+1})$.
Thus for every $A \subseteq \omega$ the following inequalities hold
$$\sum_{a\in A} g(a) = \sum_{n \in \omega} g(a_n) \geq \sum_{n \in \omega} g(a_n+1) = \sum_{a\in A+1} g(a)$$
and  
$$\sum_{a\in A} g(a)= \sum_{n \in \omega} g(a_n) \leq g(0) + \sum_{n \in \omega} g(a_n+1)=g(0) + \sum_{a\in A+1} g(a)$$
which implies that $A \in \I_g$ if and only if $A+1 \in \I_g$.

2. follows directly from 1.: $A \in \I_g$ if and only if both $B \in \I_g$ and $A \setminus B \in \I_g$. This is by 1. equivalent to $B+1 \in \I_g$ and $A \setminus B \in \I_g$ which holds if and only if $B+1 \cup A \setminus B \in \I_g$.
\end{proof}

\begin{lemma} \label{choosingA}
Assume $f \in \omega^{\omega}$, $\I_g$ and $\I_h$ are tall summable ideals with
$\I_g \not\leq_K \I_h$. If $H$ is an infinite subset of $\omega$ such that $H
\not\in \I_h$ and $f[H] \not\in \I_g$ then there exists $A \subseteq f[H]$ such
that $A \in \I_g$ and $f^{-1}[A] \cap H \not\in \I_h$.
\end{lemma}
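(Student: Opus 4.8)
The plan is to pass from the function $f$ to the weights it induces on the image and then reduce the whole statement to a comparison between $g$ and those weights. For $a\in f[H]$ put $w(a)=\sum_{b\in f^{-1}(a)\cap H}h(b)$, the $h$-mass of the fibre of $f$ over $a$ inside $H$. If some fibre is already $\I_h$-positive, i.e.\ $w(a)=+\infty$ for some $a$, then $A=\{a\}$ does the job, so I may assume every fibre lies in $\I_h$ and each $w(a)$ is finite. Since $H\notin\I_h$ we have $\sum_{a\in f[H]}w(a)=\sum_{b\in H}h(b)=+\infty$, while $f[H]\notin\I_g$ gives $\sum_{a\in f[H]}g(a)=+\infty$. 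In these terms a set $A\subseteq f[H]$ witnesses the lemma exactly when $A\in\I_g$ (so $\sum_{a\in A}g(a)<+\infty$) and $\sum_{a\in A}w(a)=+\infty$.

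First I would record the elementary selection principle that governs when such an $A$ exists: a set $A\subseteq f[H]$ with $\sum_{a\in A}g(a)<+\infty$ and $\sum_{a\in A}w(a)=+\infty$ can be found \emph{iff} $\limsup_{a\in f[H]}w(a)/g(a)=+\infty$. The forward direction is immediate, and for the converse, using that $g$ is decreasing to $0$ (the ideal is tall), for each $k$ the infinite set $\{a\in f[H]:w(a)>2^{k}g(a)\}$ contains a finite block whose $g$-mass lies in $[2^{-k-1},2^{-k}]$ and hence whose $w$-mass exceeds $\tfrac12$; taking the union of such blocks over all $k$ (chosen successively far enough out to be pairwise disjoint) yields the desired $A$. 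Thus the lemma is equivalent to the assertion that the ratios $w(a)/g(a)$ are unbounded along $f[H]$.

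The core of the argument is therefore to derive $\limsup_{a\in f[H]}w(a)/g(a)=+\infty$ from the hypothesis $\I_g\not\leq_K\I_h$, and I would do this by contraposition. Assuming the ratio is bounded, say $w(a)\le C_0\,g(a)$ for all but finitely many $a\in f[H]$, the restriction $f\restriction H$ already behaves like a Kat\v{e}tov reduction of $\I_g$ to $\I_h$ \emph{over} $H$: for any $A\in\I_g$ one has $f^{-1}[A]\cap H=\bigsqcup_{a\in A\cap f[H]}\bigl(f^{-1}(a)\cap H\bigr)$, whose $h$-mass is $\sum_{a\in A\cap f[H]}w(a)\le C_0\sum_{a\in A\cap f[H]}g(a)<+\infty$, so $f^{-1}[A]\cap H\in\I_h$. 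The task is then to promote this into a genuine witness $\phi:\omega\rightarrow\omega$ of $\I_g\leq_K\I_h$ defined on all of $\omega$, contradicting the hypothesis. I would set $\phi=f$ on $H$ and define $\phi$ on $\omega\setminus H$ so that the $h$-mass $\phi$ piles over each point stays comparable to $g$; the translation invariance of summable ideals from Lemma \ref{translates}, together with Meza's description of the summable ideals under $\leq_K$ \cite{M-A}, is what should let one redistribute $\omega\setminus H$ with $g$-controlled fibre weights.

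The main obstacle is precisely this last extension step. The behaviour of $f$ on $H$ is pinned down by the assumption, but $\omega\setminus H$ is unconstrained: if $\omega\setminus H\in\I_h$ one may simply collapse it to a single point and the contradiction is immediate, whereas if $\omega\setminus H\notin\I_h$ its $h$-mass must be spread over $f[H]$ (or over all of $\omega$) without destroying the domination by $g$. Verifying that the local domination $w\le C_0 g$ on $f[H]$ is compatible with a globally $g$-dominated pushforward of $h$ is exactly where the summable structure (Lemma \ref{translates} and \cite{M-A}) has to be invoked carefully, and I expect this to be the only non-routine part of the proof.
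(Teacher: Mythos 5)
There are two genuine gaps here, and the first is an outright false claim. Your ``elementary selection principle'' fails in the direction you need: $\limsup_{a\in f[H]} w(a)/g(a)=+\infty$ does \emph{not} imply the existence of $A$ with $\sum_{a\in A}g(a)<+\infty$ and $\sum_{a\in A}w(a)=+\infty$. Pick a sparse set $T=\{t_k:k\in\omega\}\subseteq f[H]$ with $g(t_k)\leq 2^{-k}/k$, put $w(t_k)=k\,g(t_k)\leq 2^{-k}$, and $w(a)=g(a)$ for $a\in f[H]\setminus T$ (such weights are realizable by actual data $f$, $H$, $h$: let the fibre over each $a$ carry $h$-mass $w(a)$). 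Then the ratios are unbounded and $\sum_{a\in f[H]}w(a)=+\infty$, yet for every $A$ with $\sum_{a\in A}g(a)<+\infty$ one has $\sum_{a\in A}w(a)\leq\sum_k 2^{-k}+\sum_{a\in A}g(a)<+\infty$. Your block construction breaks at exactly the unverified step: the infinite set $\{a:w(a)>2^k g(a)\}$ may carry total $g$-mass (hence $w$-mass) below $2^{-k-1}$ beyond any cutoff, so no block of the prescribed mass exists. Consequently your contraposition is unsound: failure of the lemma's conclusion does not yield $w\leq C_0\,g$ almost everywhere. (The detour is also unnecessary: the negation of the conclusion says verbatim that $f^{-1}[A]\cap H\in\I_h$ for every $A\in\I_g$ with $A\subseteq f[H]$, which is already your ``partial reduction on $H$'' with no ratio analysis at all.)

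The second gap is the one you flag yourself, and it is not a routine verification but the entire content of the lemma: when $\omega\setminus H\notin\I_h$, you have no argument for extending the reduction on $H$ to a global Kat\v{e}tov witness, and nothing in your hypotheses controls $f$ off $H$, so there is no reason the needed ``redistribution'' exists. The paper does not extend anything; it sidesteps the complement by a parity shift. Define $\tilde{f}(n)$ to be $f(n)$ or $f(n)+1$ so that $\tilde{f}[H]\subseteq\EE$ and $\tilde{f}[\omega\setminus H]\subseteq\OO$. Lemma \ref{translates} (this is where the decreasing $g$ and translation invariance are actually used) guarantees $\tilde{f}[H]\notin\I_g$ and that the shift preserves membership in $\I_g$; since $\I_g\not\leq_K\I_h$ gives $\I_g\restriction\tilde{f}[H]\not\leq_K\I_h$, there is $\tilde{A}\in\I_g$ with $\tilde{A}\subseteq\tilde{f}[H]$ and $\tilde{f}^{-1}[\tilde{A}]\notin\I_h$, and the disjointness of $\tilde{f}[H]$ and $\tilde{f}[\omega\setminus H]$ forces $\tilde{f}^{-1}[\tilde{A}]\subseteq H$ automatically; then $A=f[\tilde{f}^{-1}[\tilde{A}]]$ works, again by Lemma \ref{translates}. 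In short, the paper works directly rather than by contraposition, and the missing idea in your proposal is precisely this cheap modification of $f$ that makes the complement of $H$ unable to interfere; as written, your argument establishes the lemma only in the easy case $\omega\setminus H\in\I_h$, and its reduction of the general case rests on a false equivalence.
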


\begin{proof}
Let us denote by $\EE$ the set of all even numbers and $\OO$ the set of all odd numbers.

Define $\tilde{f}: \omega \rightarrow \omega$ by
$$\tilde{f}(n) = \left\{\begin{array}{ll}
f(n) & \hbox{if } n \in H \cap f^{-1}[\EE] \hbox{ or } n \in (\omega \setminus H) \cap f^{-1}[\OO] \\
f(n)+1 & \hbox{if } n \in H \cap f^{-1}[\OO] \hbox{ or } n \in (\omega \setminus H) \cap f^{-1}[\EE].
\end{array} \right.$$
Notice that the sets $\tilde{f}[H]$ and $\tilde{f}[\omega \setminus H]$ are disjoint because 
$\tilde{f}[H] \subseteq \EE$ and $\tilde{f}[\omega \setminus H] \subseteq \OO$.
Since $f[H] \not\in \I_g$ and $\tilde{f}[H] = (f[H] \cap \EE) \cup (f[H] \setminus \EE)+1$, 
we have $\tilde{f}[H] \not\in \I_g$ by Lemma \ref{translates}.

It follows from $\I_g \not\leq_K \I_h$ that $\I_g \restriction \tilde{f}[H] \not\leq_K \I_h$, 
so there exists a set $\tilde{A} \in \I_g \restriction \tilde{f}[H]$ such that
$\tilde{f}^{-1}[\tilde{A}] \not\in \I_h$. Put $A = f[\tilde{f}^{-1}[\tilde{A}]]$. 
It remains to verify that $A$ has all the required properties:

$\bullet$ $A \subseteq f[H]$ because $\tilde{f}^{-1}[\tilde{A}] \subseteq H$. 

$\bullet$ $A \in \I_g$ by Lemma \ref{translates} because $\tilde{A} \in \I_g$ and $\tilde{A} = (A \cap \EE) \cup (A \setminus \EE)+1$

$\bullet$ $f^{-1}[A] \cap H \supseteq \tilde{f}^{-1}[\tilde{A}] \cap H = \tilde{f}^{-1}[\tilde{A}]
\not\in \I_h$.
\end{proof}

\begin{lemma} \label{oneset}
($MA_{\sigma-\hbox{\small\rm centered}}$) Assume $\I_h$ is a tall summable ideal and $\F$ is a filter base with $|\F| < \ccc$ 
such that $\F \cap \I_h = \emptyset$.
Then there exists a set $H \subseteq \omega$ such that $H \not\in \I_h$ and $H \setminus F$ is finite for every $F \in \F$.
\end{lemma}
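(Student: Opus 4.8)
The plan is to realize $H$ as a pseudo-intersection of $\F$ read off from a generic filter for a $\sigma$-centered forcing notion, so that $MA_{\sigma\text{-centered}}$ applied to fewer than $\ccc$ dense sets delivers the desired $H$. First I would define the poset $\PP$ whose conditions are pairs $(s,\mathcal{E})$ with $s \in \Fin(\omega)$ a finite approximation of $H$ and $\mathcal{E} \in \Fin(\F)$ a finite collection of members of $\F$ recording the constraints imposed so far. The order is $(s',\mathcal{E}') \leq (s,\mathcal{E})$ iff $s \subseteq s'$, $\mathcal{E} \subseteq \mathcal{E}'$, and $s' \setminus s \subseteq \bigcap \mathcal{E}$; that is, points newly added to the approximation must belong to every set committed so far. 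Transitivity holds because enlarging the second coordinate only shrinks $\bigcap \mathcal{E}$, so a new point lying in $\bigcap \mathcal{E}'$ also lies in $\bigcap \mathcal{E}$ whenever $\mathcal{E} \subseteq \mathcal{E}'$.

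Next I would check that $\PP$ is $\sigma$-centered. Finitely many conditions $(s,\mathcal{E}_0),\dots,(s,\mathcal{E}_k)$ sharing the first coordinate $s$ have the common extension $(s,\mathcal{E}_0 \cup \dots \cup \mathcal{E}_k)$, since no new points are added. Hence $\PP$ is the union of the centered sets $\{(s,\mathcal{E}) : \mathcal{E} \in \Fin(\F)\}$ indexed by $s \in \Fin(\omega)$, of which there are only countably many.

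Then I would introduce the dense sets to be met. For each $F \in \F$ put $D_F = \{(s,\mathcal{E}) : F \in \mathcal{E}\}$; this is dense because $(s,\mathcal{E} \cup \{F\})$ extends $(s,\mathcal{E})$. For each $N \in \omega$ put $E_N = \{(s,\mathcal{E}) : \sum_{a \in s} h(a) > N\}$. The density of $E_N$ is the heart of the argument and the only place the hypothesis $\F \cap \I_h = \emptyset$ enters: given $(s,\mathcal{E})$, choose by the filter base property some $F \in \F$ with $F \subseteq \bigcap \mathcal{E}$; since $F \notin \I_h$ we get $\bigcap \mathcal{E} \notin \I_h$, so $\sum_{a \in \bigcap \mathcal{E}} h(a) = +\infty$ and finitely many points of $(\bigcap \mathcal{E}) \setminus s$ can be adjoined to $s$ to raise the partial sum above $N$. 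Altogether there are $|\F| + \aleph_0 < \ccc$ dense sets.

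Finally, $MA_{\sigma\text{-centered}}$ yields a filter $G$ meeting every $D_F$ and every $E_N$; set $H = \bigcup\{s : (s,\mathcal{E}) \in G\}$. Meeting all $E_N$ forces $\sum_{a \in H} h(a) = +\infty$, so $H \notin \I_h$. For a fixed $F \in \F$, meeting $D_F$ gives $(s_0,\mathcal{E}_0) \in G$ with $F \in \mathcal{E}_0$, and by directedness of $G$ every point of $H$ not already in $s_0$ lies in $\bigcap \mathcal{E}_0 \subseteq F$; thus $H \setminus F \subseteq s_0$ is finite. The main obstacle is verifying the density of $E_N$, which is exactly where the largeness of the members of $\F$ (and hence of their finite intersections) with respect to $\I_h$ is used.
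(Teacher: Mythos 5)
Your proposal is correct and follows essentially the same route as the paper: the same $\sigma$-centered poset of pairs (finite approximation, finite subfamily of $\F$), the same two families of dense sets, and the same verification via a generic filter. The only difference is that the paper additionally requires new points to lie above $\max L$ in the ordering, which is inessential; your density argument for $E_N$, using the filter base property to see $\bigcap \mathcal{E} \notin \I_h$, is exactly where the paper's proof does the same work.
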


\begin{proof}
Define a poset $$\PP = \{\langle K, \D \rangle : K \in [\omega]^{<\omega}, \D \in [\F]^{<\omega}\}$$ with partial order given by $\langle K, \D \rangle \leq_{\PP} \langle L, \E \rangle$ iff $K \supseteq L$, $\min K \setminus L > \max L$, $K \setminus L \subseteq \bigcap \E$ and $\D \supseteq \E$.
It is not difficult to see that $(\PP, \leq_{\PP})$ is a $\sigma$-centered poset.

Now for every $m \in \omega$ define $B_m = \{\langle K, \D \rangle \in \PP: \sum_{k \in K} h(k) \geq m\}$ and for every $F \in \F$ put $B_F =\{\langle K, \D \rangle \in \PP: F \in \D\}$.

\medskip
\noindent
{\it Claim. $B_m$ and $B_F$ are dense in $\PP$ for every $m \in \omega$ and for every $F \in \F$.}

\noindent
Consider arbitrary $\langle L, \E \rangle \in \PP$. Since $\bigcap \E \not \in \I_h$ and $\sum_{k \in \bigcap \E} h(k) = +\infty$, there exists $L' \subseteq \bigcap \E$ with $\min L' > \max L$ such that $\sum_{k \in L'} h(k) \geq m$. Put $K = L \cup L'$ and notice that $\langle K, \E \rangle \leq_{\PP} \langle L, \E \rangle$ and $\langle K, \E \rangle \in B_m$. For the second part put $\D= \E \cup \{F\}$ and observe that $\langle L, \D \rangle \leq_{\PP} \langle L, \E \rangle$ and $\langle L, \D \rangle \in B_F$. \hfill $\Box$
\medskip

According to the assumption $MA_{\sigma-\hbox{\small centered}}$ there exists a generic filter $\G$ on $\PP$.
Define $G = \bigcup \{K \in [\omega]^{<\omega}: (\exists \D \in [\F]^{<\omega}) \langle K, \D \rangle \in \G\}$.
\smallskip

(1) $G \not\in \I_h$

\noindent
For every $m \in \omega$ and every $K \in \G \cap B_{m}$ we have $G \supset K$ 
and $\sum_{k \in K} h(k) \geq m$. Thus $\sum_{k \in G} h(k) = +\infty$ and $G \not\in \I_h$.
\smallskip

(2) $(\forall F \in \F)$ $G \subseteq^{\ast} F$

\noindent
For every $F \in \F$ there exists $\langle K_F, \D_F \rangle \in \G \cap B_F$. Because $\G$ is a filter for every $\langle K, \D \rangle \in \G$ there exists $\langle L_F, \E_F \rangle \in \G$ such that $\langle L_F, \E_F \rangle \leq_{\PP} \langle K, \D \rangle$ and $\langle L_F, \E_F \rangle \leq_{\PP} \langle K_F, \D_F \rangle$. It follows that $K \setminus K_F \subseteq L_F \setminus K_F \subseteq \bigcap \D_F \subseteq F$. Thus $G \setminus K_F \subseteq F$ and $G \subseteq^{\ast} F$.
\end{proof}

\section{Main result}

We will use the fact that rapid ultrafilters are precisely those ultrafilters
which have nonempty interesection with every tall summable ideal. Thus in order
to construct an $\I_g$-ultrafilter which is not rapid, we want to construct an
$\I_g$-ultrafilter which has an empty intersection with another summable
ideal $\I_h$.

\begin{lemma} \label{Istep}
($MA_{\sigma-\hbox{\small centered}}$) Assume $\I_g$ and $\I_h$ are two tall summable ideals 
such that $\I_g \not \leq_K \I_h$.  Assume $\F$ is a filter base with $|\F| < \ccc$ 
such that $\F \cap \I_h = \emptyset$ and a function $f \in \omega^{\omega}$ is given.
Then there exists $G \subseteq \omega$ such that $f[G] \in \I_g$ and $G \cap F
\not\in \I_h$ for every $F \in \F$.
\end{lemma}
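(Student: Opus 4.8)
The plan is to collapse the universally quantified requirement ``$G \cap F \not\in \I_h$ for every $F \in \F$'' into a single condition ``$G \not\in \I_h$'' by forcing $G$ to lie below a suitable pseudo-intersection of $\F$, and then to use the Kat\v{e}tov non-reducibility to adjust the $f$-image. First I would apply Lemma \ref{oneset} to the filter base $\F$: since $|\F| < \ccc$ and $\F \cap \I_h = \emptyset$, it yields a set $H$ with $H \not\in \I_h$ and $H \setminus F$ finite for every $F \in \F$; in particular $H$ is infinite, as $\I_h$ contains all finite sets. I would then insist on choosing $G$ as a subset of $H$. For any such $G$ and any $F \in \F$, the set $G \setminus F \subseteq H \setminus F$ is finite, so $G \cap F$ differs from $G$ by a finite set. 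Because a summable ideal is unaffected by finite modifications (the missing terms contribute a finite amount to the defining sum), we get $G \cap F \not\in \I_h$ for every $F$ exactly when $G \not\in \I_h$. Thus it suffices to produce $G \subseteq H$ with $f[G] \in \I_g$ and $G \not\in \I_h$.

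Next I would split into two cases according to the size of $f[H]$. If $f[H] \in \I_g$, then $G = H$ works at once, since $f[G] = f[H] \in \I_g$ and $G = H \not\in \I_h$. If instead $f[H] \not\in \I_g$, then the hypotheses of Lemma \ref{choosingA} are all in place: $\I_g \not\leq_K \I_h$, the set $H$ is infinite with $H \not\in \I_h$, and $f[H] \not\in \I_g$. That lemma then supplies a set $A \subseteq f[H]$ with $A \in \I_g$ and $f^{-1}[A] \cap H \not\in \I_h$. Putting $G = f^{-1}[A] \cap H$ gives $G \subseteq H$, together with $f[G] \subseteq A \in \I_g$ (hence $f[G] \in \I_g$ as $\I_g$ is closed under subsets) and $G \not\in \I_h$, which is precisely what the reduction demands.

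The genuine content here has already been deposited in the two preceding lemmas, so the only delicate point in the present argument is the reduction itself: one must verify that the pseudo-intersection $H$ from Lemma \ref{oneset} simultaneously controls every member of $\F$, so that any later shrinking performed inside $H$ (either keeping $G = H$ or passing to $f^{-1}[A] \cap H$) cannot destroy the condition $G \cap F \not\in \I_h$. Once $G$ is pinned below $H$, the quantifier over $F$ disappears, and the remaining task of keeping the $f$-image in $\I_g$ while keeping its preimage out of $\I_h$ is exactly the statement of Lemma \ref{choosingA}.
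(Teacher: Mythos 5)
Your proposal is correct and follows essentially the same route as the paper: apply Lemma \ref{oneset} to get a pseudo-intersection $H \not\in \I_h$, then split into the two cases $f[H] \in \I_g$ (take $G = H$) and $f[H] \not\in \I_g$ (invoke Lemma \ref{choosingA}). The only cosmetic difference is that you set $G = f^{-1}[A] \cap H$ where the paper takes $G = f^{-1}[A]$ and verifies the conclusion via $G \cap H$; both variants work, and your version makes the reduction to a single condition $G \not\in \I_h$ slightly more explicit.
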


\begin{proof}
We may apply Lemma \ref{oneset} on $\F$ and $\I_h$. So there is a $H \not\in \I_h$ 
such that $|H \setminus F| < \omega$ for every $F \in \F$.

If $f[H] \in \I_g$ then put $G = H$.

If $f[H] \not\in \I_g$ we may apply Lemma \ref{choosingA} which provides $A
\subseteq f[H]$ such that $A \in \I_g$ and $f^{-1}[A] \cap H \not\in \I_h$. Put 
$G = f^{-1}[A]$.

$\bullet$ $f[G] = A \in \I_g$

$\bullet$ Since $G \cap H \not\in \I_h$ and $(G\cap H) \setminus F$ is finite
for every $F \in \F$ it follows that $G \cap F \not\in \I_h$ for every $F \in \F$.
\end{proof}

\begin{theorem} \label{IgnotIh}   
($MA_{\sigma-\hbox{\small centered}}$) For arbitrary tall summable ideals $\I_g$ and $\I_h$ such that $\I_g \not
\leq_K \I_h$ there is an $\I_g$-ultrafilter $\U$ with $\U \cap \I_h = \emptyset$.
\end{theorem}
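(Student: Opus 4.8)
The plan is to construct $\U$ by a transfinite recursion of length $\ccc$, using Lemma \ref{Istep} as the sole inductive engine and relying on the fact that under Martin's Axiom for $\sigma$-centered posets the cardinal $\ccc$ is regular (indeed $\ccc = \mathfrak{p}$). Throughout the recursion I maintain a filter base that is closed under finite intersections, disjoint from $\I_h$, and of size strictly below $\ccc$; these are precisely the hypotheses Lemma \ref{Istep} requires, so each step can be carried out.

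First I would fix an enumeration $\{f_\alpha : \alpha < \ccc\}$ of all functions in $\omega^{\omega}$ and set $\F_0$ to be the Fr\'echet filter of cofinite sets. Since $\sum_{n} h(n) = +\infty$, every cofinite set is $\I_h$-positive, so $\F_0 \cap \I_h = \emptyset$ and $|\F_0| = \aleph_0 < \ccc$. At a successor stage I apply Lemma \ref{Istep} to the current base $\F_\alpha$ and the function $f_\alpha$ (this is where $\I_g \not\leq_K \I_h$ is used) to obtain a set $G_\alpha$ with $f_\alpha[G_\alpha] \in \I_g$ and $G_\alpha \cap F \not\in \I_h$ for every $F \in \F_\alpha$, and I let $\F_{\alpha+1}$ be the closure of $\F_\alpha \cup \{G_\alpha\}$ under finite intersections. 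Because $\F_\alpha$ is already closed under finite intersections, every new element has the form $G_\alpha \cap F$ with $F \in \F_\alpha$ and hence lies outside $\I_h$; thus $\F_{\alpha+1} \cap \I_h = \emptyset$ and $|\F_{\alpha+1}| < \ccc$. At a limit stage $\lambda$ I take $\F_\lambda = \bigcup_{\alpha < \lambda} \F_\alpha$, which is again a filter base closed under finite intersections and disjoint from $\I_h$, with $|\F_\lambda| < \ccc$ by the regularity of $\ccc$.

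Writing $\F = \bigcup_{\alpha < \ccc} \F_\alpha$, I would then let $\U$ be a filter maximal among filters disjoint from $\I_h$ that extend the filter generated by $\F$; such a $\U$ exists by Zorn's Lemma, as the union of a chain of filters disjoint from $\I_h$ is again a filter disjoint from $\I_h$. The conceptual crux is to observe that this maximal $\U$ is automatically an ultrafilter, which is what lets me avoid separately enumerating and deciding all subsets of $\omega$: if some $X$ had both $X \not\in \U$ and $\omega \setminus X \not\in \U$, then maximality would yield $F_1, F_2 \in \U$ with $F_1 \cap X \in \I_h$ and $F_2 \cap (\omega \setminus X) \in \I_h$, whence $F_1 \cap F_2 = (F_1 \cap F_2 \cap X) \cup \bigl((F_1 \cap F_2) \setminus X\bigr) \in \I_h$, contradicting $\U \cap \I_h = \emptyset$.

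Finally I would read off the two required properties. By construction every element of $\U$ is $\I_h$-positive, so $\U \cap \I_h = \emptyset$. For the $\I_g$-ultrafilter property, any $F \colon \omega \to \omega$ equals $f_\alpha$ for some $\alpha$, and the witness $G_\alpha \in \F_{\alpha+1} \subseteq \F \subseteq \U$ satisfies $f_\alpha[G_\alpha] \in \I_g$, so $\U$ is an $\I_g$-ultrafilter. I expect no serious obstacle beyond the bookkeeping: the substantive work has been front-loaded into Lemma \ref{Istep}, and the only delicate points are keeping each $\F_\alpha$ of size below $\ccc$ (which needs regularity of $\ccc$ at limit stages) and the maximal-filter-is-an-ultrafilter argument above.
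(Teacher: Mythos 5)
Your proposal is correct and follows essentially the same route as the paper: the same transfinite recursion of length $\ccc$ driven by Lemma \ref{Istep}, maintaining a filter base of size below $\ccc$ disjoint from $\I_h$, and extending the union to an ultrafilter avoiding $\I_h$. The only differences are cosmetic: you spell out via Zorn's Lemma the standard fact (left implicit in the paper) that a filter maximal among those disjoint from an ideal is an ultrafilter, and your appeal to the regularity of $\ccc$ at limit stages is not actually needed, since adding one generator per step already gives the bound $|\F_\alpha| \leq |\alpha| + \omega < \ccc$.
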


\begin{proof}
Enumerate all functions in ${}^{\omega}\omega$ as $\{f_{\alpha}:\alpha <
\ccc\}$. By transfinite induction on $\alpha < \ccc$ we construct filter
bases $\F_{\alpha}$ such that the following conditions are satisfied:
\smallskip

(i) $\F_0$ is the Fr\'echet filter

(ii) $\F_{\alpha} \supseteq \F_{\beta}$ whenever $\alpha \geq \beta$

(iii) $\F_{\gamma} = \bigcup_{\alpha < \gamma} \F_{\alpha}$ for $\gamma$ limit

(iv) $(\forall \alpha)$ $|\F_{\alpha}| \leq |\alpha + 1| \cdot \omega$

(v) $(\forall \alpha)$ $\F_{\alpha} \cap \I_h = \emptyset$

(vi) $(\forall \alpha)$ $(\exists F \in \F_{\alpha+1})$ $f_{\alpha}[F] \in \I_g$
\smallskip

Conditions (i)--(iii) allow us to start the induction and keep it going.
Moreover (iii) ensures that (iv)--(vi) are satisfied at limit stages of the
construction, so it is necessary to verify conditions (iv)--(vi) only at
non-limit steps.

Induction step: Suppose we already know $\F_{\alpha}$.

Due to (iv) and (v) we may apply Lemma \ref{Istep} to $f_{\alpha}$ and
$\F_{\alpha}$. Let $\F_{\alpha+1}$ be the filter base generated by $\F_{\alpha}$
and $G$. The filter base $\F_{\alpha+1}$ satisfies (iv)--(vi).
\smallskip

Finally, let $\F = \bigcup_{\alpha < \ccc} \F_{\alpha}$. Because of condition (vi) 
every ultrafilter which extends $\F$ is an $\I_g$-ultrafilter. Because
of condition (v) $\F$ has empty intersection with $\I_h$ and thus can be
extended to an ultrafilter $\U$ with $\U \cap \I_h = \emptyset$. 
\end{proof}

\begin{proposition} \label{noMinimals}
For every tall summable ideal $\I_g$ there is a tall summable ideal $\I_h$ such that $\I_g \not\leq_K \I_h$.
\end{proposition}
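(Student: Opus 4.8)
The plan is to take $\I_h=\I_{\sqrt g}$, the summable ideal generated by $h=\sqrt g$. Note $h$ is again decreasing and tends to $0$, and $\sqrt{g(n)}\ge g(n)$ for all large $n$, so $\sum_n\sqrt{g(n)}=+\infty$ and $\I_h$ is a tall summable ideal. I would show $\I_g\not\leq_K\I_h$ directly. Suppose toward a contradiction that $f\in\omega^\omega$ witnesses $\I_g\leq_K\I_h$; put $P_a=f^{-1}(\{a\})$ and $w(a)=\sum_{n\in P_a}\sqrt{g(n)}$. Applying the reduction to singletons gives $w(a)<+\infty$ for each $a$, and applying it to an arbitrary $A\in\I_g$ gives $\sum_{a\in A}w(a)=\sum_{n\in f^{-1}[A]}\sqrt{g(n)}<+\infty$; that is, $\I_g\subseteq\I_w$ where $\I_w=\{A:\sum_{a\in A}w(a)<+\infty\}$. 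I would first record the routine fact that $\I_g\subseteq\I_w$ forces a constant $C$ with $\sum_a\bigl(w(a)-Cg(a)\bigr)^+<+\infty$; its contrapositive follows from a greedy block construction, picking at stage $k$ finitely many indices with $w(a)>2^{k+1}g(a)$ and total $w$-mass above $1$, which then automatically have $g$-mass below $2^{-k}$. So it suffices to contradict $\sum_a(w(a)-Cg(a))^+<+\infty$ for every partition $\{P_a\}$ and every $C$.

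The argument splits according to whether $g\in\ell^2$. Dropping cross terms, $w(a)^2=\bigl(\sum_{n\in P_a}\sqrt{g(n)}\bigr)^2\ge\sum_{n\in P_a}g(n)$, hence $\sum_a w(a)^2\ge\sum_n g(n)=+\infty$. If $\sum_n g(n)^2<+\infty$ this is immediately fatal: writing $e_a=(w(a)-Cg(a))^+$ we have $\sum_a e_a<+\infty$, so $e_a\to0$ and $\sum_a e_a^2<+\infty$, whence $\sum_a w(a)^2\le 2C^2\sum_a g(a)^2+2\sum_a e_a^2<+\infty$, a contradiction.

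The remaining and main case is $\sum_n g(n)^2=+\infty$, where the $\ell^2$ estimate is useless and one must exploit that large values of $\sqrt g$ are forced into low-index bins. Fix $C$ and $m$, and call $n$ \emph{big} if $\sqrt{g(n)}>2Cg(m)$, i.e.\ $g(n)>4C^2g(m)^2$; the big indices form an initial segment $\{0,\dots,\nu_m-1\}$ of total $\sqrt g$-weight $W_m\ge 2Cg(m)\,(\nu_m-m)$. Splitting the big indices by whether $f$ sends them below or above $m$, using $(x-y)^+\ge x-y$ on bins $a<m$ and $w(a)>2Cg(a)$ (so $(w(a)-Cg(a))^+>\tfrac12 w(a)$) on bins $a\ge m$, I would obtain the uniform bound
\[ \sum_a\bigl(w(a)-Cg(a)\bigr)^+\ \ge\ \tfrac12 W_m-C\,G(m)\ \ge\ C\bigl(g(m)\,\nu_m-2G(m)\bigr),\qquad G(m)=\sum_{n\le m}g(n). \]
It then suffices to prove $\limsup_m g(m)\nu_m/G(m)=+\infty$, which makes the right-hand side unbounded and finishes the contradiction.

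This last step is where I expect the real work. Assuming instead $g(m)\nu_m\le K\,G(m)$ for all large $m$, I would define indices $m_{i+1}=\nu_{m_i}$; then $g(m_{i+1})\le 4C^2g(m_i)^2$, so $x_i:=4C^2g(m_i)$ satisfies $x_{i+1}\le x_i^2$ and decays doubly-exponentially once $x_0<1$, while $G(m_{i+1})\le(1+K)G(m_i)$ grows only geometrically. Estimating the square-sum shell by shell,
\[ \sum_{m_i<n\le m_{i+1}}g(n)^2\ \le\ g(m_i)^2\,(m_{i+1}-m_i)\ \le\ g(m_i)\cdot g(m_i)\nu_{m_i}\ \le\ K\,g(m_i)\,G(m_i), \]
and $\sum_i g(m_i)G(m_i)\le \tfrac{G(m_0)}{4C^2}\sum_i x_i(1+K)^i<+\infty$ since $x_i\le x_0^{2^i}$; this yields $\sum_n g(n)^2<+\infty$, contradicting the case hypothesis. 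The main obstacle is precisely ruling out $g(m)\nu_m=O(G(m))$: the ``jumpy'' summable ideals, where consecutive values of $g$ have huge gaps, are exactly those with sparse big indices, making the forced-into-low-bins heuristic weak—but those ideals always lie in $\ell^2$, so the clean split keeps them in the first case, and the doubly-exponential telescoping handles everything else.
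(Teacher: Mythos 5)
Your proposal is correct, but it takes a genuinely different route from the paper's. The paper does not try to make $h$ canonical: it fixes a partition of $\omega$ into fast-growing consecutive intervals $I_n$ (with $|I_{n+1}|\ge n|\bigcup_{j\le n}I_j|$ and $g<2^{-n}$ on $I_n$), sets $h=1/n$ on $I_n$, and, given $f$, directly constructs $A\in\I_g$ with $f^{-1}[A]\notin\I_h$ by a two-case pigeonhole argument according to whether $f$ maps at least half of $I_n$ below $\min I_n$ (pigeonhole yields a single value $m_n$ with at least $n/2$ preimages in $I_n$, and tallness of $\I_g$ thins $\{m_n\}$ into $\I_g$) or above it (then $g(f(m_n))\le 2^{-n}$ makes $A=f[M]$ automatically small while $h(m_n)=1/n$ keeps the preimage $\I_h$-positive). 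You instead take the canonical witness $h=\sqrt g$ and reduce the Katětov reduction to an analytic domination statement, $\I_g\subseteq\I_w$ iff $w\le Cg$ modulo an $\ell^1$ error, which you then defeat via the dichotomy $g\in\ell^2$ (where $\sum_a w(a)^2\ge\sum_n g(n)=\infty$ clashes with $w\le Cg+e$, $e\in\ell^1$) versus $g\notin\ell^2$ (where boundedness of $g(m)\nu_m/G(m)$ would force $g\in\ell^2$ by your telescoping along $m_{i+1}=\nu_{m_i}$; I checked this step and it is sound: $x_{i+1}\le x_i^2$ decays doubly exponentially once $x_0<1$, which also guarantees $\nu_{m_i}>m_i$ so the shells tile $(m_0,\infty)$, while $G(m_{i+1})\le(1+K)G(m_i)$ grows only geometrically). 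Your route buys a stronger and more quotable conclusion -- $\I_g\not\leq_K\I_{\sqrt g}$ for every tall summable ideal, i.e.\ a witness on the same scale as $g$ -- plus a reusable characterization of inclusion between summable-type ideals; the paper's route buys brevity and stays purely combinatorial, with no $\ell^2$ analysis.

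One small repair is needed in your ``routine fact'': the greedily chosen indices with $w(a)>2^{k+1}g(a)$ do \emph{not} automatically have $g$-mass below $2^{-k}$, because a single index with very large $w(a)$ can make the final block overshoot badly, and the bound $g$-mass $\le 2^{-(k+1)}\cdot(w\hbox{-mass})$ is then useless. First split off the case where $B=\{a:w(a)>1\}$ is infinite: there, since $g$ is decreasing to $0$, choose a sparse infinite $A\subseteq B$ with $\sum_{a\in A}g(a)<\infty$ and $\sum_{a\in A}w(a)=\infty$ directly. If $B$ is finite, discard it and run the greedy construction on indices with $w(a)\le 1$; the stopping rule then caps each block's $w$-mass by $2$, hence its $g$-mass by $2^{-k}$, as you intended. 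With that one-line fix the argument is complete.
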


\begin{proof}
Since $\I_g$ is a tall summable ideal we may fix a partition of $\omega$ into finite consecutive intervals
$I_n$, $n \in \omega$ such that

(i) $I_0 \neq \emptyset$

(ii) $|I_{n+1}| \geq n |\bigcup_{j \leq n} I_j|$ for every $n \in \omega$

(iii) for every $n > 0$ if $m \in I_n$ then $g(m) < \frac{1}{2^n}$
\medskip

\noindent
Now define $h: \omega \rightarrow (0,\infty)$ by
$$h(m) = \left\{\begin{array}{ll}
1 & \hbox{for } m \in I_0 \\
\frac{1}{n} & \hbox{for } m \in I_n \hbox{ with } n \geq 1
\end{array} \right.$$
It remains to verify that $\I_g \not\leq_K \I_h$. We will show that for every $f: \omega \rightarrow \omega$
there exists $A \in \I_g$ such that $f^{-1}[A] \not\in \I_h$.

Consider $f: \omega \rightarrow \omega$ arbitrary. For every $n \in \omega$ define
$$B_n = \{m \in I_n: f(m) < \min I_n\} \quad \qquad C_n = \{m \in I_n: f(m) \geq \min I_n\}$$

{\sl Case I. $A_0 = \{n \in \omega: |B_n| \geq |C_n|\}$ is infinite}
\smallskip

Since $B_n \cup C_n = I_n$ we have $|B_n| \geq \frac{1}{2} |I_n| \geq \frac{n}{2} |\bigcup_{j < n} I_j| = \frac{n}{2} (\min I_n - 1)$.
Thus for every $n \in A_0$ there exists $m_n \in f[B_n]$ such that $|f^{-1}(m_n) \cap B_n| \geq \frac{n}{2}$.

If $A = \{m_n: n \in A_0\}$ is finite then, of course $A \in \I_g$. Otherwise there exists an infinite set $A \subseteq \{m_n: n \in A_0\}$ such that $A \in \I_g$ because $\I_g$ is a tall ideal. In both cases $\tilde{A}_0 =\{n \in A_0: m_n \in A\}$ is infinite and $f^{-1}[A] \not\in \I_h$ because
$$\sum_{a \in f^{-1}[A]} h(a) \geq \sum_{n \in \tilde{A}_0} \sum_{a \in f^{-1}[A] \cap I_n} h(a) \geq \sum_{n \in \tilde{A}_0} |f^{-1}(m_n) \cap I_n| \cdot \frac{1}{n} \geq \sum_{n \in \tilde{A}_0} \frac{1}{2} = \infty$$

\medskip

{\sl Case II. $A_0 = \{n \in \omega: |B_n| \geq |C_n|\}$ is finite}
\smallskip

According to the assumption there is $n_0 \in \omega$ such that $|B_n| < |C_n|$ for every $n \geq n_0$.
Pick $m_n \in C_n$ for every $n \geq n_0$. Put $M = \{m_n: n \geq n_0\}$ and $A = f[M]$.
Since $m_n \in C_n$ one has $f(m_n) \geq \min I_n$ and therefore $g(f(m_n)) \leq \frac{1}{2^n}$.
It is easy to see that $A \in \I_g$ because
$$\sum_{a \in A} g(a) \leq \sum_{n \geq n_0} g(f(m_n)) \leq \sum_{n \geq n_0} \frac{1}{2^n} = \frac{1}{2^{n_0-1}}.$$
It remains to verify that $f^{-1}[A] \not\in \I_h$. To see this notice that
$$\sum _{a \in f^{-1}[A]} h(a) \geq \sum_{a \in M} h(a) = \sum_{n \geq n_0} h(m_n) = \sum_{n \geq n_0} \frac{1}{n} = \infty.$$
\end{proof}

\begin{theorem} \label{Ignotrapid}   
($MA_{\sigma-\hbox{\small centered}}$) For an arbitrary tall summable ideal $\I_g$ there is an
$\I_g$-ultrafilter which is not rapid.
\end{theorem}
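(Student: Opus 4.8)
The plan is to assemble the three results just established into a short deduction, with the characterization of rapidity in Theorem~\ref{MainRapid} doing the conceptual work. Recall that that theorem says $\U$ is rapid precisely when $\U \cap \I \neq \emptyset$ for \emph{every} tall summable ideal $\I$; equivalently, $\U$ fails to be rapid exactly when there is \emph{some} tall summable ideal $\I_h$ with $\U \cap \I_h = \emptyset$. Thus, to produce an $\I_g$-ultrafilter that is not rapid, it suffices to find a tall summable ideal $\I_h$ together with an $\I_g$-ultrafilter that avoids $\I_h$ entirely. This reframing is what makes Theorem~\ref{IgnotIh} directly applicable.

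Carrying this out, I would start from the given tall summable ideal $\I_g$ and first invoke Proposition~\ref{noMinimals} to obtain a tall summable ideal $\I_h$ with $\I_g \not\leq_K \I_h$. This is exactly the hypothesis demanded by Theorem~\ref{IgnotIh}, so applying that theorem (under $MA_{\sigma\text{-centered}}$) yields an $\I_g$-ultrafilter $\U$ with $\U \cap \I_h = \emptyset$. Since $\I_h$ is a tall summable ideal and $\U$ meets it in nothing, the contrapositive of Theorem~\ref{MainRapid} gives that $\U$ is not rapid. Hence $\U$ is the desired $\I_g$-ultrafilter that is not rapid, completing the argument.

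There is essentially no obstacle left at this final step: all the genuine difficulty has been front-loaded into the preceding results. The combinatorial and forcing content lives in Theorem~\ref{IgnotIh}, whose proof runs a transfinite induction of length $\ccc$ through Lemma~\ref{Istep} (and thence Lemmas~\ref{oneset}, \ref{choosingA}, and \ref{translates}), and the set-theoretic strength invoked is precisely the $MA_{\sigma\text{-centered}}$ inherited from there. The one piece of the puzzle that genuinely must be supplied for \emph{arbitrary} $\I_g$ is Proposition~\ref{noMinimals}: it guarantees that a witnessing $\I_h$ with $\I_g \not\leq_K \I_h$ always exists, so that no tall summable ideal is $\leq_K$-least among the tall summable ideals. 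Without it the method would only yield non-rapidity for those $\I_g$ already known to sit strictly above another tall summable ideal in the Kat\v{e}tov order, whereas here the conclusion holds uniformly.
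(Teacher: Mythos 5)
Your proposal is correct and follows exactly the same route as the paper, which also derives the theorem immediately from Proposition~\ref{noMinimals} (to get $\I_h$ with $\I_g \not\leq_K \I_h$), Theorem~\ref{IgnotIh} (to get an $\I_g$-ultrafilter avoiding $\I_h$), and the characterization of rapid ultrafilters in Theorem~\ref{MainRapid}. Your added remarks about where the real work lies are accurate and merely elaborate on the paper's one-line proof.
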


\begin{proof}
This is an immediate consequence of Theorem \ref{IgnotIh}, Proposition \ref{noMinimals} and the characterization
of rapid ultrafilters in Theorem \ref{MainRapid}.
\end{proof}

\section{One possible generalization and its limits}

Once we have proved Theorem \ref{Ignotrapid}, which so to speak reverses
Corollary \ref{rapidnotI}, we may ask whether it is possible that an
ultrafilter is an $\I_g$-ultrafilter for ``many" tall summable ideals
simultaneously and still not a rapid ultrafilter. Certainly, ``many" cannot
mean all tall summable ideals, because of Theorem \ref{herrapid}.
We will show that in fact $\ddd$ many may be too much, but less than $\bbb$ is not.
					
\begin{proposition}  \label{atmostD} 
There exists a family $\mathcal{D}$ of tall summable ideals such that
$|\mathcal{D}| = \ddd$ and an ultrafilter $\U \in \omega^{\ast}$ is rapid if and
only if it has a nonempty intersection with every tall summable ideal in
$\mathcal{D}$.
\end{proposition}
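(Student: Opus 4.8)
The plan is to trade rapidity for the domination property that defines it, and to build, for each function that must be dominated, a single tall summable ideal that forces this domination. First I would fix a dominating family $\{f_{\alpha}: \alpha < \ddd\}$ in $({}^{\omega}\omega, \leq^{\ast})$ of cardinality $\ddd$; since the strictly increasing functions are cofinal in $\leq^{\ast}$, I may assume every $f_{\alpha}$ is strictly increasing. To each $f_{\alpha}$ I would attach a tall summable ideal $\I_{g_{\alpha}}$ with the crucial property that every $A \in \I_{g_{\alpha}}$ satisfies $e_{A} \geq^{\ast} f_{\alpha}$, and then set $\mathcal{D} = \{\I_{g_{\alpha}}: \alpha < \ddd\}$. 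This is really a local, quantitative version of the nontrivial implication in Theorem \ref{MainRapid}: instead of using all tall summable ideals to recover all of rapidity, I isolate one ideal per function $f_{\alpha}$.

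The construction of $g_{\alpha}$ is the heart of the matter and the only genuine obstacle. Using the blocks $[f_{\alpha}(n), f_{\alpha}(n+1))$, I would put $g_{\alpha}(m) = (n+1)^{-1/2}$ whenever $m$ lies in the $n$-th block, and $g_{\alpha}(m) = 1$ for $m < f_{\alpha}(0)$. Then $g_{\alpha}$ is non-increasing and tends to $0$, and since $f_{\alpha}(n+1) - f_{\alpha}(n) \geq 1$ we get $\sum_{m} g_{\alpha}(m) \geq \sum_{n} (n+1)^{-1/2} = +\infty$, so $\I_{g_{\alpha}}$ is a tall summable ideal. The exponent $1/2$ is chosen to make the following counting work: if $e_{A} \not\geq^{\ast} f_{\alpha}$, then $a_{n} < f_{\alpha}(n)$ for infinitely many $n$, and for each such $n \geq 1$ the $n+1$ elements $a_{0} < \dots < a_{n}$ all lie below $f_{\alpha}(n)$, where $g_{\alpha}$ takes values at least $n^{-1/2}$; hence $\sum_{a \in A} g_{\alpha}(a) \geq (n+1)\, n^{-1/2} \geq \sqrt{n}$. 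Since this holds for arbitrarily large $n$, the sum diverges and $A \notin \I_{g_{\alpha}}$. Contrapositively, $A \in \I_{g_{\alpha}}$ forces $e_{A} \geq^{\ast} f_{\alpha}$, exactly as required.

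With $\mathcal{D}$ in place both implications are immediate. If $\U$ is rapid then by Theorem \ref{MainRapid} it meets every tall summable ideal, in particular every member of $\mathcal{D}$. Conversely, assume $\U$ meets every ideal in $\mathcal{D}$ and let $f \in {}^{\omega}\omega$ be arbitrary; pick $\alpha$ with $f \leq^{\ast} f_{\alpha}$ and choose $A \in \U \cap \I_{g_{\alpha}}$. The construction yields $f \leq^{\ast} f_{\alpha} \leq^{\ast} e_{A}$, so the enumeration functions of members of $\U$ dominate $f$; as $f$ was arbitrary, these enumeration functions form a dominating family and $\U$ is rapid. Finally, $|\mathcal{D}| \leq \ddd$, and should the ideals $\I_{g_{\alpha}}$ fail to be pairwise distinct I would simply enlarge $\mathcal{D}$ by further tall summable ideals (there are $\ccc \geq \ddd$ of them) until $|\mathcal{D}| = \ddd$; this affects neither implication, since a rapid ultrafilter meets all tall summable ideals anyway.
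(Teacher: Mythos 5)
Your proof is correct, and its skeleton matches the paper's: both fix a dominating family of strictly increasing functions and attach to each $f$ a summable ideal whose determining function is constant on the blocks $[f(j),f(j+1))$ and decays slowly enough along the blocks to make the ideal tall and the total sum infinite. The difference lies in how the backward implication is verified, and yours is a genuinely different (and more self-contained) route. The paper uses weights $\frac{1}{j+1}$ on the $j$-th block (also imposing $f(j+1)\geq f(j)+j+1$, which is not in fact needed for divergence) and proves the structural fact that every tall summable ideal $\I_g$ \emph{contains} some member of $\mathcal{D}$: if $f$ dominates a suitable $f_g$ then $g \leq^{\ast} g_f$, hence $\I_{g_f} \subseteq \I_g$, and the argument closes by citing the nontrivial direction of Theorem \ref{MainRapid}. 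You instead use weights $(n+1)^{-1/2}$ and prove the local, quantitative statement that every infinite $A \in \I_{g_\alpha}$ satisfies $e_A \geq^{\ast} f_\alpha$; the exponent $1/2$ is exactly what makes your counting immediate, since with the paper's harmonic weights the same pointwise claim is still true but requires an Abel-summation-type argument rather than the one-line estimate $(n+1)\,n^{-1/2} \geq \sqrt{n}$. You then recover rapidity directly from its definition, so your proof never invokes Theorem \ref{MainRapid} for the hard direction. What the paper's route buys is the stronger coinitiality statement ($\mathcal{D}$ is $\subseteq$-coinitial among the tall summable ideals), which transfers automatically to any property characterized by meeting tall summable ideals; what your route buys is elementarity. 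Two small points: your claim ``every $A \in \I_{g_\alpha}$ satisfies $e_A \geq^{\ast} f_\alpha$'' should be stated for infinite $A$ (which suffices, as members of $\U \in \omega^{\ast}$ are infinite), and your padding remark to achieve $|\mathcal{D}| = \ddd$ exactly is harmless and correct --- indeed it addresses a point the paper's own proof silently glosses over.
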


\begin{proof}
Let us first construct the family $\mathcal{D}$:
Assume $\mathcal{F} \subseteq {}^{\omega}\omega$ is a dominating family and
$|\mathcal{F}| = \ddd$. Without loss of generality we may assume that all
functions in $\mathcal{F}$ are strictly increasing and $f(j+1) \geq f(j)+j+1$
for every $j \in \omega$.
For every $f \in \mathcal{F}$ define $g_f: \omega \rightarrow (0,+\infty)$ by
$$g_f(m)=\left\{
	\begin{array}{l @{\quad} l}
	1 & \hbox{if } m < f(0) \\
	\frac{1}{j+1} & \hbox{if } m \in [f(j),f(j+1))
	\end{array}
	\right.
$$
Let $\mathcal{D}=\{\I_{g_f}: f \in \mathcal{F}\}$.

Now, one implication is clear since every rapid ultrafilter has a nonempty
intersection with all tall summable ideals, in particular it has a nonempty
intersection with every ideal from $\mathcal{D}$.

It remains to verify that if an ultrafilter has nonempty intersection with
every ideal in $\mathcal{D}$, then it has nonempty intersection with all tall
summable ideals and therefore is rapid.
To this end, assume $\I_g$ is an arbitrary tall summable ideal. One can define
a strictly increasing function $f_g$ such that for every $j \in \omega$:
\begin{itemize}
\item $f_g(j+1) \geq f_g(j)+j+1$
\item if $m \geq f_g(j)$ then $g(m) \leq \frac{1}{2^j}$
\end{itemize}

Remember that family $\mathcal{F}$ was dominating. Hence there exists $f \in
\mathcal{F}$ and $k_0 \in \omega$ such that $f(k) \geq f_g(k)$ for every $k
\geq k_0$. For a every $n \geq f(k_0)$ there exists a unique $j \geq k_0$ such
that $n \in [f(j),f(j+1))$. Since $n \geq f(j) \geq f_g(j)$ we get $g(n) \leq
\frac{1}{2^j} \leq \frac{1}{j+1} = g_f(n)$. From $g \leq^{\ast} g_f$ follows
that $\I_{g_f} \subseteq \I_g$. Thus every ultrafilter $\U \in \omega^{\ast}$
which has a nonempty intersection with all ideals from $\mathcal{D}$ has a
nonempty intersection with $\I_g$ and since $\I_g$ was arbitrary, $\U$ is a
rapid ultrafilter,
\end{proof}
						
\begin{proposition} \label{Bcentr}   
If $\mathcal{D}$ is a family of tall summable ideals and $|\mathcal{D}| < \bbb$
then there exists a tall summable ideal $\I_g$ such that $\I_g \subseteq \I_h$
for every $\I_h \in \mathcal{D}$.
\end{proposition}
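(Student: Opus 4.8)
The plan is to reduce the real-valued inclusion problem to a $\leq^{\ast}$-bounding problem in ${}^{\omega}\omega$, where the hypothesis $|\mathcal{D}| < \bbb$ can actually be used, and then to manufacture a single step function $g$ that simultaneously dominates all the given functions and has a divergent sum. By our standing convention every ideal in $\mathcal{D}$ is a tall summable ideal determined by a decreasing function, so I would write $\mathcal{D} = \{\I_{h_i}: i \in I\}$ with $|I| < \bbb$ and each $h_i$ decreasing, $\lim_n h_i(n) = 0$ and $\sum_n h_i(n) = +\infty$. The key tool is the monotonicity of the correspondence $g \mapsto \I_g$ already exploited in the proof of Proposition \ref{atmostD}: if $h \leq^{\ast} g$ then $\I_g \subseteq \I_h$. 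Hence it suffices to construct one decreasing $g$ with $\lim_n g(n) = 0$ and $\sum_n g(n) = +\infty$ such that $h_i \leq^{\ast} g$ for every $i \in I$.

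First I would record, for each $i$ and each $k \in \omega$, the threshold $N_i(k) = \min\{n : h_i(n) \leq 2^{-k}\}$, which is a well-defined natural number because $h_i$ decreases to $0$. This yields a family $\{N_i : i \in I\}$ of fewer than $\bbb$ functions in ${}^{\omega}\omega$, so by the very definition of the bounding number there is a single $N^{\ast} \in {}^{\omega}\omega$ with $N_i \leq^{\ast} N^{\ast}$ for every $i \in I$.

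Next I would inflate $N^{\ast}$ to a strictly increasing function $N$ that still dominates every $N_i$ but leaves large gaps, by setting $N(0) = N^{\ast}(0)$ and $N(k+1) = \max\{N^{\ast}(k+1),\, N(k) + 2^k\}$. Then $N(k) \geq N^{\ast}(k)$ for all $k$, and $N(k+1) - N(k) \geq 2^k$. Define the decreasing step function $g$ by $g(n) = 1$ for $n < N(0)$ and $g(n) = 2^{-k}$ for $n \in [N(k), N(k+1))$. Tallness is immediate since $g(n) \to 0$, and $\sum_n g(n) = +\infty$ because each block contributes $2^{-k}(N(k+1)-N(k)) \geq 1$. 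To verify $h_i \leq^{\ast} g$, fix $k_i$ with $N^{\ast}(k) \geq N_i(k)$ for $k \geq k_i$; for any $n \geq N(k_i)$ pick $k \geq k_i$ with $n \in [N(k), N(k+1))$ and use $n \geq N(k) \geq N^{\ast}(k) \geq N_i(k)$ together with the monotonicity of $h_i$ to get $h_i(n) \leq h_i(N_i(k)) \leq 2^{-k} = g(n)$. By the monotonicity observation this gives $\I_g \subseteq \I_{h_i}$ for every $i \in I$, as required.

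The step I expect to need the most care — really the only subtle point — is simultaneously meeting the two seemingly competing demands on $g$: it must lie above all the $h_i$ (which pushes the thresholds, hence $N$, upward) while still summing to infinity (which needs the steps of $g$ to shrink slowly, i.e. the gaps $N(k+1)-N(k)$ to be large). These turn out to be compatible, since enlarging $N$ only strengthens domination; the crux is recognizing that one is free to inflate the $\bbb$-bound $N^{\ast}$ so as to force $\sum_n g(n) = +\infty$. The reduction to the integer-valued threshold functions $N_i$ is precisely what makes the assumption $|\mathcal{D}| < \bbb$ applicable, since $\bbb$ is a statement about $({}^{\omega}\omega, \leq^{\ast})$ rather than about the real-valued weights themselves.
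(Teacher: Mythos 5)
Your proof is correct and takes essentially the same route as the paper's: encode each $h$ by an integer-valued threshold function, use $|\mathcal{D}|<\bbb$ to find a single $\leq^{\ast}$-bound, and convert it into a decreasing step function $g$ with $\sum_{n} g(n)=+\infty$ and $h \leq^{\ast} g$, whence $\I_g \subseteq \I_h$. The only cosmetic difference is how divergence is secured: the paper assigns the harmonic value $\frac{1}{j+1}$ on the $j$-th block (so $\sum g$ diverges automatically since the blocks are nonempty), while you keep the dyadic values $2^{-k}$ and instead inflate the dominating function so each block has length at least $2^{k}$.
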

		
\begin{proof}
For every $\I_h \in \mathcal{D}$ define a strictly increasing function $f_h \in
{}^{\omega}\omega$ such that whenever $m \geq f_h(j)$ then $h(m) \leq
\frac{1}{2^j}$.

According to the assumptions, the family of functions $\mathcal{F}=\{f_h: \I_h
\in \mathcal{D}\}$ is bounded, so there exists $f \in {}^{\omega}\omega$
such that $f_h \leq^{\ast} f$ for every $f_h \in \mathcal{F}$.
We may assume that $f$ is strictly increasing.
Define $g: \omega \rightarrow (0,+\infty)$ by
$$g(m)=\left\{
	\begin{array}{l @{\quad} l}
	1 & \hbox{if } m < f(0) \\
	\frac{1}{j+1} & \hbox{if } m \in [f(j),f(j+1))
	\end{array}
	\right.
$$

For a given function $f_h \in \mathcal{F}$ there exists $k_h \in \omega$ such
that $f_h(k) \leq f(k)$ for every $k \geq k_h$.  For every $n \geq f(k_h)$
there is exactly one $j \geq k_h$ such that $n \in [f(j),f(j+1))$. Since $n
\geq f(j) \geq f_h(j)$ we get $h(n) \leq \frac{1}{2^j} \leq \frac{1}{j+1} =
g(n)$. From $h \leq^{\ast} g$ follows that $\I_g \subseteq \I_h$.
\end{proof}
						
\begin{corollary} 
($MA_{\sigma-\hbox{\small centered}}$) If $\mathcal{D}$ is a family of tall summable ideals and $|\mathcal{D}| < \ccc$
then there exists an ultrafilter $\U \in \omega^{\ast}$ such that $\U$ is an
$\I$-ultrafilter for every $\I \in \mathcal{D}$, but $\U$ is not a rapid
ultrafilter.
\end{corollary}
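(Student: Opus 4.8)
The plan is to assemble the corollary from Propositions~\ref{Bcentr} and~\ref{noMinimals} together with Theorem~\ref{IgnotIh}, so that essentially no new construction is required. The one preliminary point is a cardinal-characteristic observation: $MA_{\sigma\text{-centered}}$ implies $\bbb = \ccc$ (it forces $\mathfrak{p} = \ccc$, and $\mathfrak{p} \le \bbb \le \ccc$ always holds). Consequently the hypothesis $|\mathcal{D}| < \ccc$ is equivalent to $|\mathcal{D}| < \bbb$, which is exactly what is needed to invoke Proposition~\ref{Bcentr}.

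With this in hand, I would proceed in three steps. First, apply Proposition~\ref{Bcentr} to $\mathcal{D}$ to obtain a single tall summable ideal $\I_g$ with $\I_g \subseteq \I_h$ for every $\I_h \in \mathcal{D}$. Second, apply Proposition~\ref{noMinimals} to this $\I_g$ to obtain a tall summable ideal $\J$ with $\I_g \not\leq_K \J$. Third, apply Theorem~\ref{IgnotIh} to the pair $\I_g, \J$ to produce an $\I_g$-ultrafilter $\U$ satisfying $\U \cap \J = \emptyset$.

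It then remains only to verify the two asserted properties of $\U$. Non-rapidity is immediate: $\J$ is a tall summable ideal with $\U \cap \J = \emptyset$, so by Theorem~\ref{MainRapid} the ultrafilter $\U$ cannot be rapid. That $\U$ is an $\I_h$-ultrafilter for every $\I_h \in \mathcal{D}$ follows from monotonicity of the $\I$-ultrafilter notion under inclusion of ideals: since $\I_g \subseteq \I_h$ we have $\I_g \leq_K \I_h$ (take $f$ to be the identity map, so $f^{-1}[A] = A \in \I_g \subseteq \I_h$ for every $A \in \I_g$), and the observation recalled at the start of Section~2 then gives that every $\I_g$-ultrafilter is an $\I_h$-ultrafilter. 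Equivalently, for any $F : \omega \to \omega$ a set $A \in \U$ witnessing $F[A] \in \I_g$ also witnesses $F[A] \in \I_h$. Applying this to each $\I_h \in \mathcal{D}$ completes the argument.

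Since every ingredient is already established, I do not anticipate a genuine obstacle; the corollary is a bookkeeping assembly of the earlier results. The only place demanding care is the identification $\bbb = \ccc$ under $MA_{\sigma\text{-centered}}$: without it the quantitative bound $|\mathcal{D}| < \ccc$ appearing in the statement would not match the bound $|\mathcal{D}| < \bbb$ required by Proposition~\ref{Bcentr}, so this step is precisely what makes the stated cardinality hypothesis the right one.
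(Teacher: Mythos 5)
Your proposal is correct and follows essentially the same route as the paper: the paper's proof simply combines Theorem~\ref{Ignotrapid}, Proposition~\ref{Bcentr}, and the fact that $\bbb = \ccc$ under $MA_{\sigma\hbox{-\small centered}}$, and your argument is the same assembly with Theorem~\ref{Ignotrapid} unpacked into its ingredients (Theorem~\ref{IgnotIh}, Proposition~\ref{noMinimals}, Theorem~\ref{MainRapid}). Your explicit verification of the two points the paper leaves tacit --- that $\mathfrak{p} = \ccc$ gives $\bbb = \ccc$, and that $\I_g \subseteq \I_h$ makes every $\I_g$-ultrafilter an $\I_h$-ultrafilter --- is accurate and fills in exactly the right details.
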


\begin{proof}
Combine Theorem \ref{Ignotrapid} and Proposition \ref{Bcentr} and the fact that $\bbb = \ccc$ under $MA_{\sigma-\hbox{\small centered}}$.
\end{proof}

\section{Open questions}

Let $\mathcal{D}$ be a family of tall summable ideals.

\begin{question}
What is the minimal size of the family $\mathcal{D}$ such that rapid ultrafilters
can be characterized as those ultrafilters on the natural numbers which have
a nonempty intersection with all ideals in the family $\mathcal{D}$?
\end{question}

Due to Proposition \ref{atmostD} the size of such a family is at most $\ddd$. But is $\ddd$ really the minimum?


\end{document}